\theoremstyle{plain}
\newtheorem{thm}{{\bf Theorem}}[section]
\newtheorem{cor}[thm]{{\bf  Corollary}}
\newtheorem{prop}[thm]{{\bf Proposition}}
\newtheorem{lemma}[thm]{{\bf Lemma}}
\theoremstyle{definition}
\newtheorem{define}[thm]{{\bf Definition}}
\newtheorem{note}[thm]{{\bf Note}}
\newtheorem{question}[thm]{{\bf Question}}
\newcommand{\col}{\mathord{\mathrm{Col}}}
\newcommand{\seq}[1]{\langle {#1} \rangle}
\newcommand{\norm}[1]{\left\| {#1} \right\|}
\newcommand{\ka}{\kappa}
\newcommand{\la}{\lambda}
\newcommand{\om}{\omega}
\newcommand{\bbP}{\mathbb{P}}
\newcommand{\bbQ}{\mathbb{Q}}
\newcommand{\bbM}{\mathbb{M}}
\newcommand{\calF}{\mathcal{F}}
\newcommand{\calG}{\mathcal{G}}
\newcommand{\HOD}{\mathrm{HOD}}
\newcommand{\ZF}{\mathsf{ZF}}
\newcommand{\ZFC}{\mathsf{ZFC}}
\newcommand{\AC}{\mathsf{AC}}
\newcommand{\SVC}{\mathsf{SVC}}
\newcommand{\ON}{\mathrm{ON}}
\newcommand{\LS}{L\"owenheim-Skolem }
\newcommand{\NBG}{\mathsf{NBG}}
\newcommand{\CLS}{\mathsf{CLS}}
\title[Geology of symmetric grounds]{Geology of symmetric grounds}
\author[T. Usuba]{Toshimichi Usuba}
\address[T. Usuba]
{Faculty of Fundamental Science and Engineering,
Waseda University, 
Okubo 3-4-1, Shinjyuku, Tokyo, 169-8555 Japan}
\email{usuba@waseda.jp}
\keywords{Axiom of choice, Forcing method, Set-theoretic geology, Symmetric extension, Symmetric ground}
\subjclass[2010]{Primary 03E25, 03E40}
\begin{document}
\begin{abstract}
Let us say that a model of $\ZF$ is a symmetric ground
if $V$ is a symmetric extension of the model.
In this paper, we investigate set-theoretic geology of symmetric grounds.
Under a certain assumption,
we show that all symmetric grounds of $V$ are uniformly definable.
We also show that if $\AC$ is forceable over $V$,
then the symmetric grounds are downward directed.
\end{abstract}

\maketitle

\section{Introduction}
Let us say that a transitive model of $\ZF$ is a \emph{$\ZF$-ground}, or simply a \emph{ground}, of $V$
if $V$ is a generic extension of the model.
If a ground satisfies $\AC$, the Axiom of Choice,
then it is a \emph{$\ZFC$-ground}.
In $\ZFC$, Fuchs-Hamkins-Reitz \cite{FHR} studied the structure of all $\ZFC$-grounds of $V$,
it is called \emph{set-theoretic geology}.
Their work was under $\AC$, and Usuba \cite{U2} tried to study \emph{set-theoretic geology without $\AC$};
The universe $V$ and each ground are not assumed to satisfy $\AC$.
These are attempts to investigate the nature of forcing method.

When we want to build choiceless models,
the method of \emph{symmetric submodel}, or \emph{symmetric extension}, is a very powerful and flexible tool.
For a generic extension $V[G]$ of $V$, a symmetric submodel of $V[G]$, or a symmetric extension of $V$, 
is realized as a submodel of the generic extension $V[G]$.
The model $V[G]$ itself is a symmetric submodel of $V[G]$, so every generic extension is a symmetric extension.
Moreover symmetric extensions have many properties which are parallel to generic extensions,
such as forcing relation and forcing theorem.

We want to say that a model $W$ of $\ZF$ is a \emph{symmetric ground} of $V$ if
$V$ is a symmetric extension of $W$, that is, $V$ is a symmetric submodel of \emph{some} generic extension of $W$.
Now we can expect to extend standard set-theoretic geology to one which treats 
symmetric extensions and symmetric grounds.
If $V$ is a symmetric extension of some model,
then $V$ could be a choiceless model.
So our base theory should be $\ZF$. 
On the other hand, our definition of symmetric grounds causes a problem:
What is \emph{some} generic extension of $W$?
It would be possible that there is a $W$-generic $G$ which is 
\emph{not} living in $V$ and in any generic extension of $V$, but $V$ is  realized as a 
symmetric submodel of $W[G]$.
Hence, in $V$, it is not clear if we can describe that ``$W$ is a symmetric ground of $V$''
and develop geology of symmetric grounds.

For this problem, using Grigorieff's work \cite{G},
we prove that the statement ``$W$ is a symmetric ground of $V$'' is actually describable
in $V$ by a certain first order formula of the extended language $\{\in, W\}$.
We also prove that, under a certain assumption,
all symmetric grounds are uniformly definable by a first order formula of set-theory.
Let $\CLS$ denote the assertion that there is a proper class of \LS cardinals (see Section 3).
\begin{thm}[in $\ZF$]
Suppose $\CLS$.
Then there is a first order formula $\varphi(x,y)$ of set-theory such that:
\begin{enumerate}
\item For every set $r$, $\overline{W}_r=\{x \mid \varphi(x,r)\}$ is a symmetric ground of $V$ with $r \in \overline{W}_r$.
\item For every symmetric ground $W$ of $V$, there is $r$ with
$W=\overline{W}_r$.
\end{enumerate}
\end{thm}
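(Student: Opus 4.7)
The plan is to mimic, in the symmetric setting, the parameter-elimination strategy of $\ZF$-geology from the author's earlier work \cite{U2}, building on the observation sketched just before the theorem that ``$W$ is a symmetric ground of $V$'' is already expressible by a first-order formula in the extended language $\{\in, W\}$ via Grigorieff's framework \cite{G}. A symmetric ground $W$ carries (inside $W$) a forcing poset $\bbP$, an automorphism group $\calG$ of $\bbP$, and a normal filter $\calF$ on the subgroup lattice of $\calG$; moreover, there is a $W$-generic $G \subseteq \bbP$ such that $V$ is precisely the interpretation of the class of hereditarily symmetric names. The parameter $r$ must code enough of this data to reconstruct the class $W$ uniformly inside $V$.

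The first step is to use $\CLS$ to fix, for any given symmetric ground $W$, a \LS cardinal $\delta$ strictly above $|\mathrm{tc}(\bbP,\calG,\calF)|^W$, and to take as parameter
\[
r = \bigl(\delta,\, \bbP,\, \calG,\, \calF,\, W \cap V_\delta\bigr).
\]
The \LS property should supply the covering/approximation behaviour required at levels above $\delta$: every set in $V$ can be reached, from below $\delta$ inside the candidate $W$, by a hereditarily symmetric name, and interpreting all such names by any $W$-generic for $\bbP$ recovers exactly $V$. With this in hand, define $\varphi(x,r)$ to assert that $r$ decodes to a tuple $(\delta,\bbP,\calG,\calF,M)$, with $\delta$ a \LS cardinal and $M$ a transitive model of a sufficient fragment of $\ZF$ containing $\bbP,\calG,\calF$, and that there exists a (necessarily unique) class $W^{*}$ with $W^{*}\cap V_\delta = M$ such that $V$ is a symmetric extension of $W^{*}$ via $(\bbP,\calG,\calF)$ and $x \in W^{*}$. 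Since ``$W^{*}$ is a symmetric ground of $V$'' is first-order expressible in $\{\in,W^{*}\}$, the quantification over $W^{*}$ can be converted to the appropriate class-formula with parameter $r$, yielding a genuine first-order formula of set theory. When $r$ is not of the intended form, stipulate $\overline{W}_r := V$; since $V$ is itself a trivial symmetric ground, clause (1) holds for every $r$, and clause (2) follows because the $r$ associated to any given symmetric ground $W$ produces $\overline{W}_r = W$.

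The main obstacle is the uniqueness of the reconstruction: one must show that whenever $W_0, W_1$ are symmetric grounds of $V$ witnessed by the same $(\bbP,\calG,\calF)$ and satisfy $W_0 \cap V_\delta = W_1 \cap V_\delta$ for a \LS cardinal $\delta$ above the hereditary cardinality of the parameters, then $W_0 = W_1$. This is the symmetric-extension analogue of the Laver--Woodin--Reitz ground-definability theorem, whose standard $\ZFC$ proof rests on $\delta$-covering and $\delta$-approximation for $\bbP$-generic extensions. In the symmetric $\ZF$ setting, the \LS property should supply the necessary covering component, while the explicit syntactic characterisation of the symmetric forcing relation --- absolute between the common snapshot $W_0 \cap V_\delta = W_1 \cap V_\delta$ and each $W_i$ --- allows one to decode symmetric names by induction on rank. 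Executing this induction cleanly, and verifying closure under the hereditarily-symmetric-names operator at levels above $\delta$, is where the bulk of the technical work is expected to lie.
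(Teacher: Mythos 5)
Your approach diverges fundamentally from the paper's, and it contains two genuine gaps. First, you treat the symmetric system $(\bbP,\calG,\calF)$ together with a $W$-generic $G$ satisfying $V=\mathrm{HS}_{\calF}^G$ as data that can be coded into $r$ and then verified inside $V$. But the witnessing generic need not exist in $V$ at all --- this is precisely the difficulty raised in the paper's introduction --- so ``$V$ is a symmetric extension of $W^{*}$ via $(\bbP,\calG,\calF)$'' is not directly a first-order property of $V$ and $r$. It has to be rendered as a forcing assertion over $V$ (as in Definition \ref{3.5}: some poset in $V$ forces that $\check W^{*}$ is a ground of the generic universe), and making \emph{that} assertion first-order in the parameter is exactly the content of Lemma \ref{2.10}, which your sketch does not engage with. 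Second, and more seriously, your whole argument rests on an unproved reconstruction/uniqueness lemma --- a choiceless, symmetric-extension analogue of Laver--Woodin--Reitz definability --- which you explicitly defer (``where the bulk of the technical work is expected to lie''). That lemma \emph{is} the theorem; deferring it leaves the proof essentially empty, and it is not clear it can be proved in the form you state, since in $\ZF$ the approximation property of a symmetric submodel is not extractable from the symmetric forcing relation alone.

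The paper's proof sidesteps both issues by a reduction. Under $\CLS$ the pseudo-grounds $\{W_r'\}$ (models with the $\alpha$-norm covering and approximation properties) are already uniformly definable (Theorem \ref{2.9}), and one sets $\overline{W}_r=W_r'$ exactly when some poset forces ``$\check W_r'$ is a ground of the universe'' (first-order by Lemma \ref{2.10}), and $\overline{W}_r=V$ otherwise. The forward inclusion is immediate from Definition \ref{3.5}; for the converse, given a symmetric ground $W$ one passes to a generic extension $V[G]$ in which $W$ is a genuine ground, applies Theorem \ref{2.6} there to obtain the $\kappa$-norm covering and approximation properties for $V[G]$, and pulls them down to $V$ by Lemma \ref{2.7}, so that $W=W_r'$ for some $r$. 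All approximation-style work is thus done once, for genuine grounds, in the earlier machinery; no new argument about symmetric systems is needed. To salvage your approach you would have to either prove your reconstruction lemma from scratch or route through a generic extension as the paper does.
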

$\CLS$ follows from $\AC$.
Hence in $\ZFC$, all symmetric grounds are uniformly definable.
We also show that if $V$ satisfies $\AC$,
then $V$ is definable in any symmetric extension of $V$.

The above uniform definability of symmetric grounds allows us to investigate  \emph{set-theoretic geology of symmetric grounds} in $\ZF$, which is a study of the structure of all symmetric grounds.
This paper is a first step in  the set-theoretic geology of symmetric grounds.
In $\ZF$, it is consistent that there are two grounds which have no common ground (\cite{U2}).
We show that, if $\AC$ is forceable over $V$ then
all symmetric grounds are downward directed.
Moreover the intersection of all symmetric grounds is a model of $\ZFC$ if $\AC$ is forceable over $V$.

\section{Preliminaries}
\textbf{Throughout this paper, we do not assume $\AC$ unless otherwise specified.}
Forcing means a set forcing.
A \emph{class} 
means a second order object in the sense of
Von Neumann-Bernays-G\"odel set-theory $\mathsf{NBG}$
unless otherwise specified.
We do not require that a class $M$ is definable in $V$ with some
parameters, but
we assume that $V$ satisfies the comprehension and replacement scheme 
for the formulas of the language $\{\in, M\}$
(where we identify $M$ as a unary predicate).
Note that, if $M$ is a definable class by a formula of the language $\{\in\}$,
then $V$ satisfies the comprehension and replacement scheme 
for the formulas of the language $\{\in, M\}$.
Hence every definable class is a \emph{class} in our sense.
We also note that $V$ is a class of any generic extension of $V$
by the forcing theorem.
Any theorems in this paper involving classes can be formalized 
in $\NBG$, or some small extension of $\ZF$.\footnote{
For instance,
we extend the language of set-theory by adding unary predicates $M, N, W,\dotsc$
and let $T$ be the theory $\ZF$ together with the comprehension and replacement scheme
for the formulas of the language $\{\in, M, N,W,\dotsc\}$.}

To treat classes in generic extensions, we extend the forcing language and 
relation as follows. 
Let $M$ be a class.
First, we extend the forcing language
by adding the symbol $\check M$.
For a poset $\bbP$, $p \in \bbP$, and a $\bbP$-name $\dot x$,
we define $p \Vdash_\bbP \dot x \in \check M$ if
the set $\{q \le p \mid$ there is $x \in M$ with $q \Vdash_\bbP x =\dot x \}$
is dense below $p$.
$\check M$ can be treated as a class-sized canonical name, that is,
$\check M=\{\seq{\check x, p} \mid p \in \bbP, x \in M\}$,
where $\check x$ is the canonical name for $x$.
By the standard way, we define $p \Vdash_\bbP \varphi$ for every formula $\varphi$
of the extended forcing language.
Since $M$ is a class of $V$, we can easily check that the forcing theorem holds for the 
extended forcing language and relation,
and $M=\{\dot x^G \mid p \Vdash_\bbP \dot x \in \check M$ for some $p \in G\}$.
Using the extended forcing theorem,
we have the following:
If $M$ is a class of $V$, then it is a class of any generic extension of $V$.

A model of $\ZF(\mathsf{C})$ means a transitive class model of $\ZF(\mathsf{C})$ containing all ordinals.
It is known that a transitive class $M$ containing all ordinals
is a model of $\ZF$ if and only if
$M$ is closed under the G\"odel operations and almost universal,
that is, for every set $X \subseteq M$ there is a set $Y \in M$ with
$X \subseteq Y$.
So we can identify the sentence ``$M$ is a model of $\ZF$''
with the conjunction of the following sentences of the language $\{\in, M\}$:
\begin{enumerate}
\item $M$ is transitive and contains all ordinals.
\item $M$ is closed under the G\"odel operations.
\item $M$ is almost universal.
\end{enumerate}
A model $W$ of $\ZF$ is a \emph{$\ZF$-ground}, or simply \emph{ground}, of $V$ if
there are a poset $\bbP \in W$ and
a $(W, \bbP)$-generic $G$ with $V=W[G]$.
If a $\ZF$-ground satisfies $\AC$, then it is a \emph{$\ZFC$-ground}.
\textbf{\textbf From now on,  a ground means a $\ZF$-ground unless otherwise specified.}

For a model $M$ of $\ZF$ and an ordinal $\alpha$, let $M_\alpha=M \cap V_\alpha$,
the set of all $x \in M$ with rank  less than $\alpha$.

Here we present a series of results by Grigorieff \cite{G},
which we will use frequently.

\begin{thm}[\cite{G}]
For models $M$, $N$ of $\ZF$,
if $M \subseteq N$, $M$ is a ground of $N$, and $N$ is a ground of $V$,
then $M$ is a ground of $V$.
\end{thm}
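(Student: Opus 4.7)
The plan is to show $M$ is a ground of $V$ by chaining the two forcings into a single two-step iteration, being careful that every step survives in $\ZF$. Fix witnesses: a poset $\bbP \in M$ and an $(M,\bbP)$-generic $G$ with $N = M[G]$, and a poset $\bbQ \in N$ and an $(N,\bbQ)$-generic $H$ with $V = N[H]$. Since $\bbQ \in N = M[G]$, there is a $\bbP$-name $\dot\bbQ \in M$ with $\dot\bbQ^G = \bbQ$, and by passing to a dense subset below the relevant condition we may assume $\Vdash_\bbP^M$ ``$\dot\bbQ$ is a poset''.

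Form the two-step iteration $\bbP * \dot\bbQ$, a poset lying in $M$ by a purely combinatorial construction that requires no choice. Set
\[
K = \{(p,\dot q) \in \bbP * \dot\bbQ : p \in G \text{ and } \dot q^G \in H\}.
\]
The first task is to check that $K$ is $(M, \bbP*\dot\bbQ)$-generic: given a dense $D \in M$, one extracts the dense set of $p \in \bbP$ forcing that $\{\dot q : (\check p,\dot q) \in \check D\}$ is dense in $\dot\bbQ$, picks such a $p \in G$, and then meets the resulting dense subset of $\dot\bbQ^G = \bbQ$ using $H$. The second task is $M[K] = V$: projection onto coordinates recovers both $G$ and (via interpretation by $G$) $H$ from $K$, so $N = M[G] \subseteq M[K]$ and $H \in M[K]$, whence $V = N[H] \subseteq M[K]$; the reverse inclusion is immediate from $K \in V$ and $M \subseteq V$.

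The main obstacle — and the reason this is not completely automatic — is ensuring that each ingredient of the classical two-step iteration argument goes through in $\ZF$ without a hidden appeal to $\AC$: the existence of the name $\dot\bbQ$ in $M$, the well-definedness of $\bbP*\dot\bbQ$, the factoring of $(M,\bbP*\dot\bbQ)$-generics into pairs $(G,H)$, and the equality of extensions. All of these are forcing-theoretic or combinatorial statements whose standard proofs use only $\ZF$; the extended forcing theorem recalled in the preliminaries supplies the apparatus needed to handle classes, and the factoring step reduces to a density calculation in $\bbP$ followed by one in the quotient $\dot\bbQ^G$. This is precisely the content of Grigorieff's result, and the proof is the verification that these standard arguments are choice-free.
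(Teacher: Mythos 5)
The paper offers no proof of this statement: it is quoted directly from Grigorieff, so there is no in-text argument to compare against. Your two-step iteration argument is the standard proof of transitivity of the ground relation, and it is indeed choice-free, so the proposal is correct in substance. Two small points deserve a word more care than you give them. First, to make $\bbP * \dot\bbQ$ a set rather than a proper class you must restrict the second coordinates to a set of names, e.g.\ to $\dot q \in \dom(\dot\bbQ)$ (which suffices, since every element of $\dot\bbQ^G$ is the evaluation of such a name) or to names of bounded rank; this is a definable restriction and uses no choice, but it is the one place where the ``purely combinatorial construction'' needs to be pinned down. Second, rather than passing to a dense subset one usually replaces $\dot\bbQ$ by a name forced by every condition to be a poset (interpreting it as the trivial poset where the original fails to be one); otherwise $\bbP\restriction p_0$ for $p_0 \in G$ forcing ``$\dot\bbQ$ is a poset'' works, but then you are proving that $M$ is a ground via $(\bbP\restriction p_0) * \dot\bbQ$, which is what the definition requires anyway. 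With those details filled in, the density computation for the genericity of $K$ and the mutual reconstructibility of $(G,H)$ and $K$ are exactly as you describe and nowhere invoke $\AC$.
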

This means that the ``is a ground of'' relation is transitive in $\ZF$.

For a model $M$ of $\ZF$ and a set $X$,
let $M(X)=\bigcup_{\alpha \in \mathrm{ON}} L(M_\alpha \cup \{X\})$.
$M(X)$ is the minimal model of $\ZF$ with $M\subseteq M(X)$ and $X \in M(X)$.
Note that $M(X)$ is also a class of $V$.

\begin{thm}[Theorem B in \cite{G}]\label{1.2}
Let $M \subseteq V$ be a ground of $V$.
Let $N$ be a model of $\ZF$ 
and suppose $M \subseteq N \subseteq V$.
Then the following are equivalent:
\begin{enumerate}
\item $V$ is a generic extension of $N$.
\item $N$ is of the form $M(X)$ for some $X \in N$.
\end{enumerate}
\end{thm}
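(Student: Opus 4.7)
I treat the two implications separately.

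\textbf{(2) $\Rightarrow$ (1).} Fix $\bbP \in M$ and an $(M,\bbP)$-generic $G$ with $V=M[G]$, and a $\bbP$-name $\dot X \in M$ with $\dot X^G = X$. The plan is to exhibit a poset $\bbQ \in N$ and an $(N,\bbQ)$-generic $H$ witnessing $V$ as a generic extension of $N$. The natural construction is a quotient of $\bbP$ by an equivalence that identifies conditions indistinguishable with respect to $\dot X$: declare $p \equiv q$ iff $p$ and $q$ force identically all statements about $L(\check M_\alpha \cup \{\dot X\})$, uniformly in $\alpha$. This equivalence is definable in $N = M(X)$ using only $X$ and $M$-parameters, so $\bbQ = \bbP/{\equiv}$ lies in $N$. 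Take $H = G/{\equiv}$. Using the extended forcing theorem for the class name $\check M_\alpha$ and the absoluteness of the $L$-construction, verify that $H$ is $(N,\bbQ)$-generic and that $N[H] = V$.

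\textbf{(1) $\Rightarrow$ (2).} Fix $\bbQ \in N$ and $(N,\bbQ)$-generic $H$ with $V = N[H]$, and $\bbP, G$ as before. The plan is to produce $X \in N$ with $N = M(X)$; the inclusion $M(X) \subseteq N$ follows from minimality, so the work is in $N \subseteq M(X)$. I would take $X$ to be an ``$N$-trace'' of $G$: working in $M$ with the extended forcing relation for the class name $\check N$, isolate the $\bbP$-names $\sigma \in M$ whose membership in $\check N$ is forced (so their interpretations lie in $N$), and let $X$ code the $G$-restriction of the associated Boolean values. The defining formula of $X$ makes $X$ $N$-internal, so $X \in N$. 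A rank induction then establishes $N \subseteq M(X)$: each $y \in N$ has a $\bbP$-name $\sigma \in M$ whose Boolean value $\va \sigma = \check y \va$ is recorded by $X$, so $y$ is constructible inside $L(M_\alpha \cup \{X\})$ for sufficiently large $\alpha$.

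\textbf{Main obstacle.} The chief difficulty is direction (1) $\Rightarrow$ (2): specifying the set $X$ and verifying $N \subseteq M(X)$. Without $\AC$, the ``Boolean algebra of $N$-events'' inside $\ro(\bbP)$ may not exist in $M$ as a genuine complete subalgebra, so the $N$-trace must be assembled combinatorially inside $\bbP$ using the extended forcing relation for $\check N$. The rank induction is the delicate final step: at rank $\alpha$ one must show that $X$ records enough of $G$ to decode every $\bbP$-name in $M$ of rank $<\alpha$ whose interpretation lies in $N$, which requires careful bookkeeping of Boolean values against $L$-definability levels.
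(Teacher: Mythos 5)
First, a point of comparison: the paper does not prove Theorem \ref{1.2} at all --- it is quoted as Theorem B of Grigorieff \cite{G} and used as a black box --- so there is no in-paper argument to measure yours against; you are attempting to reconstruct a substantial external theorem. As it stands, your text is a plan rather than a proof: every essential verification is deferred (``verify that $H$ is $(N,\bbQ)$-generic and that $N[H]=V$'', ``a rank induction then establishes $N\subseteq M(X)$''), and you yourself flag the decisive step of (1)~$\Rightarrow$~(2) as an unresolved obstacle.

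Beyond incompleteness, the construction you propose for (2)~$\Rightarrow$~(1) points in the wrong direction. Quotienting $\bbP$ by the equivalence ``$p$ and $q$ force the same statements about $L(\check M_\alpha\cup\{\dot X\})$'' collapses exactly the information that distinguishes $V$ from $N$: the resulting $\bbQ$ is a combinatorial avatar of the subalgebra of $\mathrm{ro}(\bbP)$ that takes $M$ \emph{up to} $N$, so $N[G/{\equiv}]$ cannot recover $V$. Concretely, take $M=L$, $\bbP$ the Cohen forcing, $X=\emptyset$, $N=M$: by weak homogeneity any two conditions force the same statements about $\check M_\alpha$, hence $\bbP/{\equiv}$ is a single point and $N[H]=L\neq L[c]=V$. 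What is needed is the opposite quotient: one first isolates the filter $G_0$ that $G$ induces on the relevant subalgebra (an object of $N$, not of $M$), and then forces from $N$ to $V$ with the suborder of conditions compatible with $G_0$ --- Solovay-style quotient forcing, whose choiceless implementation (existence of the subalgebra, genericity of the quotient) is precisely the content of \S\S5--7 of \cite{G}. For (1)~$\Rightarrow$~(2) your ``$N$-trace of $G$'' is the right sort of witness, but the inclusion $N\subseteq M(X)$ is the whole theorem and is left open; note that under the additional hypothesis $\CLS$ one can instead take $X=N_\ka$ for a large \LS cardinal $\ka$ and deduce $N=M(N_\ka)$ from Theorems \ref{2.5} and \ref{2.6} together with the already-established direction (2)~$\Rightarrow$~(1), though this does not recover Grigorieff's theorem in full $\ZF$ generality.
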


For a  class $X$,
let $\HOD_X$ be the collection of all hereditarily definable sets
with parameters from $\ON \cup X$.
If $M$ is a model of $\ZF$,
it is known that 
$\HOD_M$ is also a model of $\ZF$ with $M \subseteq \HOD_M$
\footnote{Note that $\HOD_M$ is not the relativisation of $\HOD$ to $M$.},
and $\HOD_M$ is a class of $V$ (e.g., see \cite{G}).
We note that $M$ is a class of $\HOD_M$ if $M$ is a model of $\ZF$.

\begin{thm}[9.3 Theorem 1 in \cite{G}]\label{1.3}
Let $M$ be  a model of $\ZF$ such that
$V=M(X)$ for some $X \in V$.
Then $V$ is a ground of some generic extension of $\HOD_M$,
and $\HOD_M=M(Y)$ for some $Y \in V$ with $Y \subseteq M$.
\end{thm}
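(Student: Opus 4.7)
My plan is to position $V$ as an intermediate model between $\HOD_M$ and a generic extension of $\HOD_M$, then invoke Theorem~\ref{1.2}. The key identity is $V = \HOD_M(X)$: since $M \subseteq \HOD_M$, we have $M(X) \subseteq \HOD_M(X)$, while $\HOD_M \cup \{X\} \subseteq V = M(X)$ forces $\HOD_M(X) \subseteq V = M(X)$ by the minimality of $M(X)$ as a $\ZF$-model containing $\HOD_M$ and $X$. Hence $V = \HOD_M(X)$.

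For the first claim I would carry out a Vop\v{e}nka-style argument inside $\HOD_M$. Critically, $\HOD_M \models \AC$, which enables the usual forcing combinatorics to go through in $\ZF$. Form inside $\HOD_M$ the Boolean algebra $\mathbb{B}$ whose conditions are codes $(\phi, \vec{\alpha}, \vec{m})$ with $\vec{m} \in M$, quotiented by the $M$-ordinal-definable relation of defining the same subset of a chosen $V_\beta \ni X$; the Boolean relations and the quotient are $M$-OD, placing $\mathbb{B}$ in $\HOD_M$. Set $G = \{[(\phi, \vec{\alpha}, \vec{m})] : V \models \phi(X, \vec{\alpha}, \vec{m})\}$. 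Then $G$ is $\HOD_M$-generic by the standard antichain argument (any maximal antichain in $\HOD_M$ is $M$-OD, so its union is an $M$-OD subset of $V_\beta$ which must equal $V_\beta$, forcing $X$ into some element), and $X \in \HOD_M[G]$ by a recovery argument using a well-ordering of the relevant encodings available in $\HOD_M$ by $\AC$. Hence $V = \HOD_M(X) \subseteq \HOD_M[G]$, and Theorem~\ref{1.2} applied to the chain $\HOD_M \subseteq V \subseteq \HOD_M[G]$ --- with condition (2) satisfied via $V = \HOD_M(X)$ --- gives condition (1): $\HOD_M[G]$ is a generic extension of $V$. Thus $V$ is a ground of $\HOD_M[G]$, which is itself a generic extension of $\HOD_M$.

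For the second claim, I extract $Y \subseteq M$ from the same Vop\v{e}nka apparatus. Each condition code is a formula together with parameters from $M \cup \ON$; absorbing formulas and ordinals into $M$ (using $\omega, \ON \subseteq M$), all codes and their $M$-OD relations can be packaged as a single subset $Y$ of $M$ with $Y \in \HOD_M$. Hence $M(Y) \subseteq \HOD_M$. Conversely, inside $M(Y)$ the coded algebra $Y$ can be interpreted back as the Vop\v{e}nka algebra in $V$, from which each $M$-OD subset of $V_\beta$ is recovered; iterating across ranks yields $\HOD_M \subseteq M(Y)$.

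The principal obstacle is in the Vop\v{e}nka step: in $\ZF$ a general set $X$ is not a set of ordinals, so the bit-by-bit recovery of $X$ from a Vop\v{e}nka generic does not apply directly. The workaround uses that $\HOD_M \models \AC$ provides a well-ordering of the $M$-OD-classes involved, together with the fact that $V = M(X)$ pins $X$ down through its role as generator; pinning down this recovery cleanly is the technical heart of the proof. A secondary delicacy is the reconstruction of all of $\HOD_M$ from a single bounded-rank $Y \subseteq M$, which rests on the $M$-OD uniformity of the Vop\v{e}nka coding combined with $\ZF$-recursion inside $M(Y)$.
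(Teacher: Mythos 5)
The paper does not actually prove Theorem \ref{1.3}; it is quoted from Grigorieff \cite{G}. Judged on its own terms, your argument has a structural defect that goes beyond the step you yourself flag as open. Your Vop\v{e}nka filter $G$ is defined in $V$ from $X$ and truth in $V$, so $G\in V$, and since the quotient algebra $\mathbb{B}$ lies in $\HOD_M\subseteq V$, evaluation of names gives $\HOD_M[G]\subseteq V$. Hence if $X$ were recoverable from $G$ over $\HOD_M$, you would conclude $V=\HOD_M(X)\subseteq\HOD_M[G]\subseteq V$, i.e.\ that $V$ is \emph{itself} a generic extension of $\HOD_M$. That is false in general: take $M=L$ and $V=L(\mathbb{R})$ under $\mathsf{AD}$; then $\HOD_M=\HOD$ satisfies $\AC$, so all of its generic extensions satisfy $\AC$, while $V$ does not. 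So the recovery of $X$ --- which you correctly call the technical heart but do not carry out --- is not a missing detail; it is impossible in the configuration you set up. This is precisely why the theorem asserts only that $V$ is a ground of \emph{some} generic extension of $\HOD_M$ (cf.\ Note \ref{1.4}): one must first pass to a genuine further extension of $V$ (e.g.\ a collapse making $X$ codable by a set of ordinals) before any Vop\v{e}nka-style analysis over $\HOD_M$ can succeed.

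Two supporting claims are also wrong or unsubstantiated. The assertion $\HOD_M\models\AC$ fails in general: if $M=V$ (take $X=\emptyset$) then $\HOD_M=V$, and whenever the parameter class $M$ is not definably well-orderable (e.g.\ $\mathbb{R}\subseteq M$ in a choiceless setting) the usual proof that $\HOD$ satisfies choice breaks down; every appeal in your argument to ``a well-ordering available in $\HOD_M$ by $\AC$'' is therefore unfounded. For the second claim, your $Y$ codes only the Vop\v{e}nka apparatus of a single $V_\beta$, and ``iterating across ranks'' is not a mechanism: the $M$-OD subsets of $V_{\beta'}$ for $\beta'>\beta$ are not determined by those of $V_\beta$, so the inclusion $\HOD_M\subseteq M(Y)$ is not established. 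Both halves of the theorem require exploiting $V=M(X)$ in a uniform, reflected way (every set of $V$ is definable from $X$, ordinals, and the sets $M_\alpha\in M$), and the proposal does not supply that.
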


\begin{note}\label{1.4}
One may wonder what is ``some generic extension of $\HOD_M$''.
This theorem can be justified as follows:
There is a poset $\bbP$
such that, in $V$, $\bbP$ forces that ``$(\HOD_M)^{\check{}}$ is a ground of the universe''.
\end{note}

For a set $S$,
let $\col(S)$ be the poset consists of all finite partial functions from
$\om$ to $S$ ordered by  reverse inclusion.
$\col(S)$ is weakly homogeneous, and if $S$ is ordinal definable then so is $\col(S)$.
\begin{thm}[4.9 Theorem 1 in \cite{G}]\label{1.5}
Let $\bbP$  be a poset, and $G$ be $(V, \bbP)$-generic.
Let $\alpha$ be a limit ordinal with $\alpha>\mathrm{rank}(\bbP)\cdot \om$.
Let $H$ be $(V[G], \col(V[G]_\alpha))$-generic.
Then there is a $(V, \col(V_\alpha))$-generic $H'$ with
$V[G][H]=V[H']$.
\end{thm}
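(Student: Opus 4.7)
\emph{Plan.} The statement is a form of Solovay's absorption lemma: a small forcing $\bbP$ followed by the large collapse $\col(V[G]_\alpha)$ can be absorbed into the single collapse $\col(V_\alpha)$ taken inside $V$. The plan is to exhibit, inside $V[G][H]$, a filter $H'$ that is $(V,\col(V_\alpha))$-generic and satisfies $V[H']=V[G][H]$.

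\emph{Setup.} The hypothesis $\alpha>\mathrm{rank}(\bbP)\cdot\om$ gives $\bbP \in V_\alpha$, so $G\subseteq V_\alpha$ and indeed $G\in V[G]_\alpha$; moreover every element of $V[G]_\alpha$ has a $\bbP$-name of rank below $\alpha$, so in $V$ we have $|V[G]_\alpha|^{V[G]}\le |V_\alpha|$. In $V[G][H]$ the generic $H$ induces a surjection $f\colon\om\to V[G]_\alpha$, and since $V_\alpha\subseteq V[G]_\alpha$ the set $V_\alpha$ is countable in $V[G][H]$.

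\emph{Strategy.} The first approach I would try is to show, inside $V$, that the two-step iteration $\bbP * \col(V[G]_\alpha)$ is forcing-equivalent to $\col(V_\alpha)$. Both are countable chain condition, atomless, weakly homogeneous posets of cardinality $|V_\alpha|$ whose generic object is a finite-condition enumeration of a set of size $|V_\alpha|$ onto $\om$; by the standard uniqueness theorem for finite-condition Cohen collapse of a fixed cardinality, their Boolean completions are isomorphic in $V$. Transferring $G*H$ across such an isomorphism produces the desired $H'$ with $V[H']=V[G][H]$ automatically.

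\emph{Direct construction and main obstacle.} Alternatively, $H'$ can be built by hand. Partition $\om$ into infinite sets $A_0,A_1$. Define $g\colon\om\to V_\alpha$ by using $f\!\restriction\!A_0$ (after removing indices $n$ with $f(n)\notin V_\alpha$ and re-ordering) to enumerate $V_\alpha$, and use the indices in $A_1$ to encode the extra information needed to recover $f$, and hence $G$ and $H$, from $g$. Let $H'$ be the filter of all finite restrictions of $g$. The main obstacle is arranging \emph{simultaneously} that (a) $H'$ meets every dense $D\in V$ of $\col(V_\alpha)$ and (b) $V[H']\supseteq\{G,H\}$. For (a), the weak homogeneity of $\col(V[G]_\alpha)$ should let one show that, in $V[G]$, the set of conditions forcing ``the induced $A_0$-restriction enters $\check D$'' is dense in $\col(V[G]_\alpha)$; for (b), the $A_1$-coding must be chosen so that $f$ is recoverable from $g$ using only parameters in $V$. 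Balancing these two requirements is the genuine difficulty; the rank hypothesis on $\alpha$ is what keeps both computations inside $V_\alpha$ and makes this balancing possible.
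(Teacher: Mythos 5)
This statement is imported verbatim from Grigorieff (4.9, Theorem~1 of \cite{G}); the paper gives no proof of it, so there is nothing in the text to compare you against. Judged on its own, your proposal does not close the argument, for two reasons.

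First, your main strategy rests on ``the standard uniqueness theorem for finite-condition Cohen collapse of a fixed cardinality.'' That is a $\ZFC$ fact: its usual proof well-orders the poset (or a dense set of names) and builds a dense embedding by recursion, making $\om$-many or $|V_\alpha|$-many choices. But the ambient theory here is $\ZF$ -- the paper is explicitly choiceless, and Lemma~\ref{1.5+} applies the theorem to arbitrary $\ZF$-grounds -- so $V_\alpha$ need not be well-orderable and ``cardinality $|V_\alpha|$'' is not even a robust notion. Producing this absorption \emph{without} choice, for the specific poset of finite partial functions from $\om$ into $V_\alpha$, is precisely the content of Grigorieff's Section~4.9; it cannot be outsourced to the $\ZFC$ folklore lemma. (Separately, the claim that these posets are c.c.c.\ is false -- the conditions $\{(0,x)\}$ for $x\in V_\alpha$ form an antichain of size $|V_\alpha|$ -- though the c.c.c.\ plays no role in the absorption lemma anyway, so this is a cosmetic rather than structural error.)

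Second, your fallback ``direct construction'' is an honest sketch that stops exactly at the crux: you name the problem of simultaneously (a) meeting every dense subset of $\col(V_\alpha)$ lying in $V$ and (b) coding $G$ and $H$ into $H'$ so that $V[H']=V[G][H]$, and then state that ``balancing these two requirements is the genuine difficulty.'' That balancing act \emph{is} the theorem; identifying it is not proving it. The correct observations in your setup (that $\alpha>\mathrm{rank}(\bbP)\cdot\om$ forces every element of $V[G]_\alpha$ to have a $\bbP$-name in $V_\alpha$, hence a surjection in $V[G]$ from $V_\alpha$ onto $V[G]_\alpha$) are indeed the raw material Grigorieff uses, but the construction of the dense embedding (equivalently, of $H'$ together with the verification of genericity over $V$ and of mutual interdefinability) is missing. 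As it stands, the proposal reduces the theorem to either an inapplicable $\ZFC$ lemma or an unexecuted construction.
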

The following is just rephrasing of this theorem:
\begin{lemma}\label{1.5+}
Let $M,N$ be  grounds of $V$, and $\alpha$ a sufficiently large limit ordinal.
Then  whenever $G$ is $(V, \col(V_\alpha))$-generic,
there are an $(M, \col(M_\alpha))$-generic $H$ and an $(N, \col(N_\alpha))$-generic $H'$ with
$V[G]=M[H]=N[H']$.
\end{lemma}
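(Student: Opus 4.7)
The plan is to derive this lemma by applying Theorem \ref{1.5} separately to each of the two grounds. Since $M$ is a ground of $V$, fix a poset $\bbP_M \in M$ and an $(M, \bbP_M)$-generic filter $G_M$ with $V = M[G_M]$; similarly fix $\bbP_N \in N$ and an $(N, \bbP_N)$-generic filter $G_N$ with $V = N[G_N]$. Take ``sufficiently large'' to mean that $\alpha$ is a limit ordinal satisfying $\alpha > \mathrm{rank}(\bbP_M) \cdot \om$ and $\alpha > \mathrm{rank}(\bbP_N) \cdot \om$, so that the hypothesis of Theorem \ref{1.5} is met for both of the pairs $(\bbP_M, G_M)$ and $(\bbP_N, G_N)$.

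Now suppose $G$ is $(V, \col(V_\alpha))$-generic. Since $V = M[G_M]$, we have $V_\alpha = M[G_M]_\alpha$, so $G$ is automatically $(M[G_M], \col(M[G_M]_\alpha))$-generic. I would then invoke Theorem \ref{1.5} with the role of the ground model of the theorem played by $M$, the poset $\bbP$ taken to be $\bbP_M$, the generic $G$ of the theorem taken to be $G_M$, and the generic $H$ of the theorem taken to be our given $G$. The conclusion of the theorem produces an $(M, \col(M_\alpha))$-generic filter $H$ such that $M[G_M][G] = M[H]$, i.e.\ $V[G] = M[H]$. Exactly the same argument, now with $N$ in place of $M$, yields an $(N, \col(N_\alpha))$-generic $H'$ with $V[G] = N[H']$, and combining the two equalities gives $V[G] = M[H] = N[H']$ as required.

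I do not anticipate any substantive obstacle: the lemma is described in the text as a rephrasing of Theorem \ref{1.5}, and the only point requiring care is the bookkeeping, namely that $\alpha$ be chosen above both rank thresholds at once so that a single generic $G$ for $\col(V_\alpha)$ simultaneously serves as input to both applications of the theorem.
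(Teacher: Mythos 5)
Your proposal is correct and is exactly the intended argument: the paper treats the lemma as an immediate rephrasing of Theorem \ref{1.5}, obtained by applying that theorem once with ground $M$ and once with ground $N$, after choosing $\alpha$ above both rank thresholds. The one point needing care, that the given $G$ serves simultaneously as the ``$H$'' of both applications because $V_\alpha = M[G_M]_\alpha = N[G_N]_\alpha$, is handled correctly.
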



We recall some definitions and facts about symmetric submodels and extensions.
See Jech \cite{J} for more details.
Let $\bbP$ be a poset, and $\mathrm{Auto}(\bbP)$ the group of the automorphisms on $\bbP$.
For $\pi \in \mathrm{Auto}(\bbP)$, 
$\pi$ can be extended to the (class) map from the $\bbP$-names to $\bbP$-names canonically.
Let $\calG$ be a subgroup of $\mathrm{Auto}(\bbP)$.
A non-empty family $\calF$ of subgroups of $\calG$ is a \emph{normal filter} on $\calG$ if:
\begin{enumerate}
\item $H, H' \in \calF \Rightarrow H \cap H' \in \calF$.
\item $H \in \calF$, $H'$ is a subgroup of $\calG$ with $H \subseteq H'$
then $H'\in \calF$.
\item For $H \in \calF$ and $\pi \in \calG$, we have 
$\pi^{-1} H \pi \in \calF$.
\end{enumerate}
A triple $\seq{\bbP, \calG, \calF}$ is a \emph{symmetric system}
if $\calG$ is a subgroup of $\mathrm{Auto}(\bbP)$ and
$\calF$ is a normal filter on $\calG$.
A $\bbP$-name $\dot x$ is
\emph{$\calF$-symmetric} if $\{ \pi \in \calG \mid \pi(\dot x)=\dot x\} \in \calF$.
Let $\mathrm{HS}_{\calF}$ be the class of all hereditarily $\calF$-symmetric names.

If $G$ is $(V, \bbP)$-generic,
then $\mathrm{HS}_{\calF}^G=\{\dot x^G \mid \dot x \in \mathrm{HS}_\calF\}$
is a transitive model of $\ZF$ with $V \subseteq\mathrm{HS}_{\calF}^G$.
$\mathrm{HS}_{\calF}^G$ is called 
a \emph{symmetric submodel of $V[G]$}, or a \emph{symmetric extension of $V$}.
If we want to specify a poset $\bbP$,
we say that $\mathrm{HS}_{\calF}^G$ is a symmetric extension of $V$ via a poset $\bbP$,
or a symmetric submodel of $V[G]$ via a poset $\bbP$.
We note that 
$\mathrm{HS}_{\calF}^G$ is a class of $V[G]$, and $V$ is a class of $\mathrm{HS}_{\calF}^G$.

\begin{note}\label{2.7-}
Let $M$ be a model of of $\ZF$,
$V[G]$ a generic extension of $V$ via a poset $\bbP$,
and $V \subseteq M \subseteq V[G]$.
One may expect that if $M$ is a symmetric extension of $V$,
then $M$ is a symmetric submodel of $V[G]$
via a poset $\bbP$. However this is not correct.
This follows from the construction of the Bristol model (Karagila \cite{K}).
The Bristol model $M$ is an intermediate model between
$L$ and the Cohen forcing extension $L[c]$,
and $M$ cannot be of the form $L(X)$ for some set $X$.
$M$ is of the form $\bigcup_{\alpha \in \mathrm{ON}} L(M_{\alpha})$,
and each $L(M_{\alpha})$ is a symmetric extension of $L$,
but there is some $\beta$ such that for every  $\alpha \ge \beta$,
$L(M_\alpha)$ is not a symmetric submodel of $L[c]$ via the Cohen forcing notion.
\end{note}
The following theorem tells us when $M \subseteq V[G]$ is a symmetric submodel of $V[G]$.

\begin{thm}[Theorem C in \cite{G}]\label{1.6}
Let $M \subseteq V$ be a model of $\ZF$, $\bbP \in M$ a poset, and
$G$ be $(V, \bbP)$-generic and suppose $V=M[G]$.
Let $N$ be a model of $\ZF$ with $M \subseteq N \subseteq V$.
Then $N$ is a symmetric submodel of $V$ via $\mathrm{ro}(\bbP)^M$ if and only if
$N$ is of the form $\mathrm{HOD}_{M(X)}$ for some
$X \in N$, where $\mathrm{ro}(\bbP)$ is the completion of $\bbP$.\footnote{We can take the completion of $\bbP$ without $\AC$,
see \cite{G}.}
\end{thm}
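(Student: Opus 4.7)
The plan is to prove the biconditional in two stages: the forward implication requires a careful construction of the parameter $X$, while the reverse implication leans on Theorems~\ref{1.2} and~\ref{1.3}.

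For $(\Leftarrow)$, assume $N=\HOD_{M(X)}^V$ for some $X\in N$. Since $M$ is a ground of $V=M[G]$ and $M\subseteq M(X)\subseteq V$, Theorem~\ref{1.2} yields that $V$ is a generic extension of $M(X)$. Applying Theorem~\ref{1.3} to the pair $(M(X),V)$ then gives a poset in $M(X)$ forcing that $\HOD_{M(X)}^V=N$ is a ground of the universe, so $V$ sits close to $N$ in the sense of generic extensions. To realize $N$ as $\mathrm{HS}_{\calF}^G$ for a symmetric system on $\mathrm{ro}(\bbP)^M\in M$, I would take $\calG$ to be the subgroup of $\mathrm{Auto}(\mathrm{ro}(\bbP)^M)$ whose induced action on $V=M[G]$ fixes $X$ setwise, and take $\calF$ to be the normal filter on $\calG$ generated by the pointwise stabilizers of the sets ordinal-$M(X)$-definable in $V$. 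The verification $\mathrm{HS}_{\calF}^G=N$ then matches $\calF$-symmetry of names against hereditary $\HOD_{M(X)}$-definability in $V$.

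For $(\Rightarrow)$, assume $N=\mathrm{HS}_{\calF}^G$ for some symmetric system $\langle \mathrm{ro}(\bbP)^M,\calG,\calF\rangle\in M$. The goal is to produce $X\in N$ with $N=\HOD_{M(X)}^V$. A natural candidate is a set $X\in N$ encoding the ``symmetric trace'' of $G$ --- roughly, a set that together with the symmetric system (which is in $M$) suffices to recover the $\calG$-orbit of $G$ in a $\calF$-invariant way, so that $X$ is itself $\calF$-symmetrically named and thus in $N$. Then $\HOD_{M(X)}^V\subseteq N$ because $M,X\in N$ and any set in $V$ that is ordinal-$M(X)$-definable admits a hereditarily $\calF$-symmetric name in $M$ built from the defining formula and the symmetric forcing relation (which is a class of $M$, hence of $M(X)$). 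Conversely, given $y\in N$ with a hereditarily $\calF$-symmetric name $\dot y\in M$, the value $\dot y^G$ is recoverable in $V$ from $\dot y$ together with $X$ via the symmetric forcing relation, so $y\in\HOD_{M(X)}^V$.

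The main obstacle is the precise choice and analysis of the parameter $X$ in the forward direction: $X$ must simultaneously lie in $N$, be preserved by the operative symmetries, and be rich enough that the valuation map $\dot y\mapsto \dot y^G$ on hereditarily $\calF$-symmetric names becomes definable in $V$ from parameters in $\ON\cup M(X)$. The reverse direction is structurally cleaner but still requires showing that the symmetry group we extract actually lives in $M$ as a subgroup of $\mathrm{Auto}(\mathrm{ro}(\bbP)^M)$ rather than in some larger model; this is what makes the theorem a statement specifically about the poset $\mathrm{ro}(\bbP)^M$, not a generic intermediate-model assertion.
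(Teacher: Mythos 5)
First, a point of orientation: the paper does not prove this statement at all --- it is quoted verbatim as Theorem C of Grigorieff \cite{G} and used as a black box throughout. So your proposal is not being measured against anything in this text, but against Grigorieff's original argument, which is one of the substantial theorems of his paper.

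Your sketch has the right overall shape (names, stabilizers, homogeneity), but both directions contain genuine gaps. In the $(\Leftarrow)$ direction the symmetric system is not well posed: an automorphism $\pi$ of $\mathrm{ro}(\bbP)^M$ acts on names in $M$, not on $V=M[G]$; since $\pi$ in general moves $G$, it induces no map on $V$ at all, so ``the subgroup whose induced action on $V$ fixes $X$ setwise'' and ``pointwise stabilizers of the sets ordinal-$M(X)$-definable in $V$'' do not define subgroups. Moreover the system $\seq{\mathrm{ro}(\bbP)^M,\calG,\calF}$ must live in $M$, whereas your $\calF$ is defined by reference to definability over $V$. The correct device is the stabilizer $\{\pi\mid [\![\pi(\dot X)=\dot X]\!]=1\}$ of a fixed name $\dot X\in M$ for $X$, and the resulting equality $\mathrm{HS}_{\calF}^G=\HOD_{M(X)}$ is precisely the hard content of the theorem: the inclusion $\HOD_{M(X)}\subseteq\mathrm{HS}_{\calF}^G$ needs a homogeneity argument showing that conditions deciding an ordinal-$M(X)$-definable statement differently can be interchanged by an automorphism fixing $\dot X$, and this is where working with the completion $\mathrm{ro}(\bbP)^M$ rather than $\bbP$ itself is essential. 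In the $(\Rightarrow)$ direction the entire burden is carried by an unspecified ``symmetric trace'' $X$ of $G$; as written this is a restatement of the goal rather than a construction (Grigorieff obtains $X$ roughly from the orbit-evaluations $\{\pi(\dot a)^G\mid\pi\in H\}$ for $H$ ranging over a generating set of $\calF$, and verifying that this $X$ is itself hereditarily symmetric and suffices to define evaluation of symmetric names is nontrivial). Finally, your appeal to Theorems \ref{1.2} and \ref{1.3} only yields that $V$ is a ground of some generic extension of $N$ --- the weak conclusion corresponding to Proposition \ref{3.4} --- and does not by itself produce a symmetric system on the specific algebra $\mathrm{ro}(\bbP)^M$.
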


\section{Choiceless geology}
In this section, we recall some definitions and facts about set-theoretic geology without $\AC$.
See Usuba \cite{U2} for more information.
For \LS cardinals below, see also \cite{U3}.

We use the notion of \emph{\LS cardinal},
which was introduced in \cite{U2}.
The notion of \LS cardinal corresponds to the downward 
\LS theorem in the context of $\ZFC$.
The downward \LS theorem is not provable from $\ZF$,
but under the existence of \LS cardinals,
we can take  \emph{small} elementary submodels of $V_\alpha$ for every $\alpha$.

\begin{define}
An uncountable cardinal $\ka$ is a \emph{\LS cardinal}
if for every $\alpha \ge \ka$, $x \in V_\alpha$, and $\gamma<\ka$,
there are $\beta \ge \alpha$ and $X \prec V_\beta$ satisfying the following:
\begin{enumerate}
\item $x \in X$ and $V_\gamma \subseteq X$.
\item ${}^{V_\gamma} (X \cap V_\alpha) \subseteq X$.
\item The transitive collapse of $X$ belongs to $V_\ka$.
\end{enumerate}
\end{define}
Note that every limit of \LS cardinals
is a \LS cardinal as well.

\begin{thm}[\cite{U2}]\label{2.2}
Suppose $\ka$ is a limit of \LS cardinals.
Then for every poset $\bbP \in V_\ka$, $\bbP$ forces that
``$\ka$ is a \LS cardinal''.
Conversely, if $\ka<\la$ are cardinals,
$\bbP \in V_\ka$ a poset, and $\Vdash_\bbP$``$\ka,\la$ are \LS'',
then $\la$ is \LS in $V$.
\end{thm}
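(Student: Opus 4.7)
The plan is to prove both implications of the theorem by \LS transfer between $V$ and a generic extension $V[G]$, using the forcing theorem to lift and descend elementary submodels of $V_\beta$ and $V[G]_\beta$.

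For the preservation direction, I fix $\bbP \in V_\ka$ with $\ka$ a limit of \LS cardinals in $V$, let $G$ be $(V,\bbP)$-generic, and let $\alpha \ge \ka$, $x \in V[G]_\alpha$, $\gamma < \ka$ test the \LS property of $\ka$ in $V[G]$. I would descend by choosing an \LS cardinal $\mu < \ka$ in $V$ large enough that $\bbP \in V_\mu$ and every element of $V[G]_\gamma$ has a $\bbP$-name in some $V_{\gamma'}$ with $\gamma' < \mu$, picking a $\bbP$-name $\dot x \in V_\alpha$ for $x$, and applying the \LS property of $\mu$ in $V$ to the input $(\alpha, \dot x, \gamma')$ to produce $\beta \ge \alpha$ and $X \prec V_\beta$ with $\dot x \in X$, $V_{\gamma'} \subseteq X$, ${}^{V_{\gamma'}}(X \cap V_\alpha) \subseteq X$, and transitive collapse $\bar X \in V_\mu$. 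The desired submodel in $V[G]$ is $Y := \{\dot y^G : \dot y \in X\}$: a standard forcing-lifting argument (using $\bbP \in X$ and the forcing theorem inside $X$) gives $Y \prec V[G]_\beta$ and $V[G]_\gamma \subseteq Y$; the $V_{\gamma'}$-closure of $X$ upgrades to $V[G]_\gamma$-closure of $Y$ by coding sequences as names; and the transitive collapse of $Y$ is $\bar X[\bar G]$, where $\bar G$ is the collapse image of $G \cap X$, which is $(\bar X, \bar\bbP)$-generic and lies in $V[G]_\ka$ since $\bar X, \bar \bbP \in V_\mu \subsetneq V_\ka$.

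For the converse, given $\alpha \ge \la$, $x \in V_\alpha$, $\gamma < \la$ in $V$, I would produce $X \in V$ from a name for an \LS witness in $V[G]$. Since $\Vdash_\bbP$ ``$\la$ is \LS'', a standard maximal-antichain and name-mixing argument yields in $V$ a $\bbP$-name $\dot Y$ and an ordinal $\beta$ with $1_\bbP \Vdash$ ``$\dot Y \prec V[\dot G]_\beta$ is an \LS witness for $\la$ with input $(\check\alpha, \check x, \check\gamma)$''. Set
\[
X := \{y \in V_\beta : \exists p \in \bbP,\ p \Vdash \check y \in \dot Y\} \in V.
\]
Then $V_\gamma \cup \{x\} \subseteq X$ follows from the forced versions, and closure of $X$ under $V_\gamma$-sequences into $V_\alpha$ follows from the forced closure of $\dot Y$, applied via canonical names $\check f$ for $f \in V$. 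Elementarity $X \prec V_\beta$ is extracted from $\Vdash \dot Y \prec V[\dot G]_\beta$ by the forcing theorem: for $V$-parameters, every existential witness in $V[\dot G]_\beta$ provided by $\dot Y^{\dot G}$ corresponds to some $y \in V_\beta$ and some $p \in \bbP$ with $p \Vdash \check y \in \dot Y$, hence $y \in X$. To bound the cardinality and place the transitive collapse in $V_\la$, fix in $V[G]$ a surjection $f : \eta \to Y$ with $\eta < \la$ (obtained from $|Y|^{V[G]} < \la$), take a name $\dot f$ for it, and observe that each $y \in X$ is hit by some pair $(\xi, p) \in \eta \times \bbP$ with $p \Vdash \dot f(\check\xi) = \check y$, giving a size bound of $|\eta| \cdot |\bbP| < \la$.

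The main obstacle is the converse direction. The elementarity step is subtle because $V_\beta \not\prec V[G]_\beta$ in general, so one must exploit that all parameters lie in $V$ and extract witnesses via the forcing relation rather than via a direct substructure argument. The cardinality bound requires ``small in $V[G]$ implies small in $V$'' for the relevant objects, and this is where the hypothesis that $\ka$ is \LS in $V[G]$ (together with $\bbP \in V_\ka \subsetneq V_\la$) is essential: it provides the uniform size control on the parametrization $\dot f$ that survives the descent to $V$ without invoking $\AC$.
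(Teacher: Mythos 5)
The paper only cites this theorem from \cite{U2} and gives no proof of it, so I am judging your argument on its own terms. Your preservation direction is the standard, essentially correct approach: descend to names below an auxiliary \LS cardinal $\mu<\ka$ (exactly where the hypothesis that $\ka$ is a \emph{limit} of \LS cardinals enters), take $X\prec V_\beta$ in $V$, and lift to $Y=\{\dot y^G:\dot y\in X\}$. Two details need care: you must arrange $\bbP\subseteq V_{\gamma'}\subseteq X$ so that $G$ meets every dense set belonging to $X$ \emph{inside} $X$, and you cannot expect the forcing relation for $V[\dot G]_\beta$-statements to be computed correctly in $V_\beta$ for the arbitrary $\beta$ the \LS property hands you; the usual fix is to put the set-sized restricted forcing relation $R=\{(p,\varphi,\vec{\dot y}) : p\Vdash \varphi^{V[\dot G]_{\alpha^*}}(\vec{\dot y})\}$ into $X$ as an element and verify $Y\prec V[G]_{\alpha^*}$ from $R\in X$. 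These are repairable.

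The converse direction has a genuine gap at the elementarity step. First, producing a single name $\dot Y$ with $1_\bbP\Vdash$ ``$\dot Y$ is a witness'' by mixing invokes the maximal principle, a fragment of $\AC$ that is unavailable here. More seriously, even granting such a $\dot Y$, the set $X=\{y\in V_\beta : \exists p\ p\Vdash \check y\in\dot Y\}$ need not satisfy $X\prec V_\beta$, and your remark that one must ``extract witnesses via the forcing relation'' does not supply a mechanism for this. The witnesses $\dot Y$ provides are witnesses for truth in $V[G]_\beta$; they may not lie in $V$ at all, and conversely $V_\beta$ may demand witnesses that no condition ever forces into $\dot Y$. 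Concretely, if $y\in X$, nothing forces $\p(y)^V$ into $\dot Y$ (the model $\dot Y^G$ sees only $\p(y)^{V[G]}$), so $X$ can contain $y$ while omitting $\p(y)^V$, failing the Tarski--Vaught test for $V_\beta$. The same defect breaks your closure argument: for $f\in V$ with $f:V_\gamma\to X\cap V_\alpha$, each value $f(a)$ is forced into $\dot Y$ by possibly pairwise incompatible conditions, so the forced closure of $\dot Y$ never applies to $f$. The standard repair works on the $V[G]$ side: include the \emph{structure} $(V_\beta,\in)$ (equivalently, its satisfaction relation) as an element of the submodel $Y\prec V[G]_{\beta'}$ given by the \LS property of $\la$ in $V[G]$, so that $Y\cap V_\beta\prec V_\beta$ by the usual argument; the real work --- and the place where the hypothesis that $\ka$ is \LS in $V[G]$ and the closure under small sequences must be used --- is then to show that $Y\cap V_\beta$ is an element of $V$, which your construction guarantees by fiat but at the cost of losing elementarity.
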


\begin{define}
For a set $X$, the \emph{norm} of $X$, denoted by $\norm{X}$, is the least ordinal $\alpha$
such that there is a surjection from $V_\alpha$ onto $X$.
\end{define}

\begin{define}
Let $M \subseteq V$ be a model of (a sufficiently large fragment of ) $\ZF$,
and $\alpha$ an ordinal.
\begin{enumerate}
\item $M$ has the \emph{$\alpha$-norm covering property} for $V$
if for every set $X \subseteq M$, if $\norm{X}<\alpha$ then 
there is $Y \in M$ such that $X \subseteq Y$ and $\norm{Y}^M<\alpha$.
\item $M$ has the \emph{$\alpha$-norm approximation property} for $V$
if for every set $Y \subseteq M$, if $Y \cap a \in M$
for every $a \in M$ with $\norm{a}^M<\alpha$
then $Y \in M$.
\end{enumerate}
\end{define}

\begin{thm}[\cite{U2}]\label{2.5}
Suppose $\ka$ is a \LS cardinal and $\alpha<\ka$.
For all models $M, N$ of (a sufficiently large fragment of) $\ZF$,
if $M$ and $N$ have the $\alpha$-norm covering and the $\alpha$-norm approximation properties for $V$
and $M_\ka=N_\ka$,
then $M=N$.
\end{thm}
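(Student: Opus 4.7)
The plan is to establish $M = N$ by proving, via transfinite induction on ordinals $\beta \geq \ka$, that $M \cap V_\beta = N \cap V_\beta$. The base case $\beta = \ka$ is the hypothesis $M_\ka = N_\ka$, and the limit case follows by taking unions. By symmetry the successor step reduces to: assuming $M_\beta = N_\beta$ and fixing $Y \in M$ of rank at most $\beta$, show $Y \in N$.

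First, since $Y \subseteq V_\beta$ with each element of strictly smaller rank, the inductive hypothesis gives $Y \subseteq V_\beta \cap M = V_\beta \cap N \subseteq N$. Hence the $\alpha$-norm approximation property for $N$ applies to $Y$, and it suffices to show $Y \cap a \in N$ for every $a \in N$ with $\norm{a}^N < \alpha$. Fix such an $a$. Any surjection $V_\gamma^N \to a$ in $N$ witnessing $\norm{a}^N < \alpha$ (with $\gamma < \alpha$) also lies in $V$, so $\norm{a}^V < \alpha$ as well. Replacing $a$ by $a \cap V_\beta^N$ does not change $Y \cap a$ (as $Y \subseteq V_\beta$) and can only decrease the norm, so we may assume $a \subseteq V_\beta^N = V_\beta^M \subseteq M$. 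By the $\alpha$-norm covering property for $M$ there is $b \in M$ with $a \subseteq b$ and $\norm{b}^M < \alpha$; intersecting with $V_\beta^M \in M$ lets us assume $b \subseteq V_\beta$. Then $Y \cap b \in M$ and $\norm{Y \cap b}^M \leq \norm{b}^M < \alpha$ (precompose any $M$-surjection onto $b$ with the characteristic function of $Y$).

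The key remaining step is to deduce $Y \cap b \in N$; once that is in hand, $Y \cap a = (Y \cap b) \cap a \in N$ and the induction closes. The subtlety is that $\mathrm{rank}(Y \cap b)$ may equal $\beta$, so the inductive hypothesis does not directly place it into $N$. Here the \LS cardinal hypothesis is what saves us: since $\norm{Y \cap b}^M < \alpha < \ka$, a witnessing surjection $f : V_\gamma^M \to Y \cap b$ in $M$ has domain $V_\gamma^M = V_\gamma^N \in M_\ka = N_\ka$; by applying the \LS property to some $V_\theta$ containing $f$ and $Y \cap b$ with appropriate closure at $V_\alpha$, we obtain an elementary submodel whose transitive collapse lies in $V_\ka$ and codes $Y \cap b$. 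The agreement $M_\ka = N_\ka$ then yields this code inside $N$, from which $Y \cap b$ can be reconstructed in $N$ (using again the approximation property and the elementarity).

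The main obstacle is precisely this last reflection step, where one must transfer the small-norm datum for $Y\cap b$ from $M$ into $N$ without direct recourse to the inductive hypothesis on rank. This is where the full strength of the \LS cardinal is needed: only then can small-norm sets at arbitrarily high rank be compressed into $V_\ka$ and matched across $M$ and $N$ via $M_\ka = N_\ka$. If instead one tried to do the induction on rank with approximation alone, one would be stuck at limit $\beta$ (and at successors when $Y \cap b$ inherits rank $\beta$ from unbounded-rank elements), which is exactly the gap the \LS hypothesis is designed to close.
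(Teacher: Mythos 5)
The paper does not actually prove Theorem \ref{2.5}; it is quoted from \cite{U2}, so there is no in-paper argument to compare against, and your proposal has to stand on its own. Its framework is reasonable and matches the standard strategy for approximation-and-cover uniqueness theorems: induct on rank, use the approximation property of $N$ to reduce membership of $Y$ to membership of the ``small'' traces $Y\cap a$, and use the covering property of $M$ to replace $a$ by some $b\in M$ with $\norm{b}^M<\alpha$, so that everything hinges on the single key claim: if $Z\in M$, $\norm{Z}^M<\alpha$, and $Z\subseteq M_\beta=N_\beta$, then $Z\in N$. Up to that point the proposal is essentially correct (modulo small points such as checking that intersecting with $V_\beta^M$ does not increase the norm, which you address).

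However, that key claim is the entire mathematical content of the theorem, and your treatment of it is a gap, not a proof. Two specific steps fail as written. First, you assert that the transitive collapse ``codes $Y\cap b$'' and that ``the agreement $M_\ka=N_\ka$ then yields this code inside $N$''; but to invoke $M_\ka=N_\ka$ you must first show that the collapsed set $\pi[Y\cap b]$ is an element of $M$ (not merely of $V_\ka$), and the collapse map $\pi$ is an object of $V$, not of $M$, so this requires a genuine argument via the approximation property of $M$ and the closure of $X$ under $V_\gamma$-sequences --- none of which is supplied. Second, and more seriously, even granting $\pi[Y\cap b]\in M_\ka=N_\ka\subseteq N$, there is no mechanism by which $N$ can \emph{invert} the collapse to recover $Y\cap b$: the map $\pi^{-1}\restriction\pi[Y\cap b]$ lives in $V$, and ``elementarity'' does not hand it to $N$. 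The actual proof in \cite{U2} is a simultaneous reflection argument: one chooses $X\prec V_{\beta'}$ containing both $M\cap V_\theta$ and $N\cap V_\theta$, shows (using both the covering and approximation properties together with the sequence-closure of $X$) that the collapsed images of both structures land in $M_\ka=N_\ka$ and must coincide there, and only then pulls the conclusion back. Your sketch identifies correctly where the \LS cardinal must enter, but the step ``from which $Y\cap b$ can be reconstructed in $N$'' is precisely the theorem itself and is not established.
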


\begin{thm}[\cite{U2}]\label{2.6}
Let $\ka$ be a  \LS  cardinal, and $W \subseteq V$ a model of $\ZF$.
If there are a poset $\bbP \in W_\ka$ and 
a $(W, \bbP)$-generic $G$ with $V=W[G]$,
then $W$ has the  $\ka$-norm covering and the $\ka$-norm approximation properties for $V$.
\end{thm}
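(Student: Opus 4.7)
My plan is to prove the two properties separately, each time exploiting that $\bbP \in W_\kappa$ has rank strictly below $\kappa$.

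For the covering property, let $X \subseteq W$ with $\norm{X}^V<\kappa$, witnessed by a surjection $f: V_\alpha^V \to X$ with $\alpha < \kappa$. Fix a $\bbP$-name $\dot f \in W$ for $f$ and choose $\gamma<\kappa$ exceeding both $\alpha$ and $\mathrm{rank}(\bbP)$, large enough that every element of $V_\alpha^V$ admits a $\bbP$-name in $W$ of rank $<\gamma$ (which is possible precisely because $\bbP$ has small rank). Then
$$Y=\bigl\{y \in W : (\exists p \in \bbP)(\exists \dot v \in W_\gamma)\,(p \Vdash \dot f(\dot v) = \check y)\bigr\}$$
is a set in $W$ containing $X$, and the partial map $(p,\dot v) \mapsto y$ extends to a surjection from $W_\delta$ (for some $\delta<\kappa$ bounding the rank of $\bbP \times W_\gamma$) onto $Y$, giving $\norm{Y}^W<\kappa$.

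For the approximation property, suppose $Y \subseteq W$ satisfies $Y \cap a \in W$ for all $a \in W$ with $\norm{a}^W<\kappa$; the goal is $Y \in W$. The plan is to invoke the \LS property of $\kappa$ in $V$: fix a $\bbP$-name $\tau \in W$ for $Y$, take $\alpha$ with $Y,\tau,\bbP,G \in V_\alpha$ and $\gamma<\kappa$ with $\mathrm{rank}(\bbP)<\gamma$, and obtain $X \prec V_\beta$ containing these parameters, with $V_\gamma \subseteq X$, the partial closure ${}^{V_\gamma}(X \cap V_\alpha) \subseteq X$, and transitive collapse $\bar X \in V_\kappa$. Since the critical point of the collapse $\pi$ lies above $\gamma$, $\pi$ fixes $\bbP$ and $G$ pointwise, so it sends $\tau$ to a $\bbP$-name $\bar\tau \in V_\kappa$ with $\bar\tau^G = \pi(Y)$. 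Choosing $a \in W$ to be a set of norm $<\kappa$ coded from $X \cap W$, the hypothesis $Y \cap a \in W$ together with the recipe provided by $\bar\tau$ should identify $Y$ as an element of $W$.

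The main obstacle is the approximation step. In $\ZFC$ this would follow automatically from the $\kappa$-c.c.\ of forcings of small cardinality, but in $\ZF$ we lose antichain bounds and must route through the \LS-submodel machinery. The delicate bookkeeping is in ensuring that the collapse $\bar X$ (a priori living in $V$, not $W$) can be simulated inside $W$, and that the approximations $Y \cap a$ glue coherently with $\bar\tau$ to produce $Y$ rather than some proper subset. A natural preliminary step is to first verify that $\kappa$ remains a \LS cardinal in $W$ — plausible by a converse reading of Theorem \ref{2.2}, since $\bbP$ has rank below $\kappa$ — which would allow the elementary submodel construction to take place inside $W$ from the outset, sidestepping the simulation issue entirely.
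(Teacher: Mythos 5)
Your covering argument is essentially right and is the standard one: every element of $V_\alpha$ admits a $\bbP$-name in $W$ of rank below some $\gamma<\ka$ obtained from $\alpha$ and $\mathrm{rank}(\bbP)$ by ordinal arithmetic (under which the uncountable cardinal $\ka$ is closed), and the set of forced values of $\dot f$ on such names is a member of $W$ covering $X$ and a surjective image of a subset of $\bbP\times W_\gamma$. Note that this half uses nothing about $\ka$ beyond its being an uncountable cardinal above $\mathrm{rank}(\bbP)$; the \LS property is needed only for approximation.

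The approximation half has a genuine gap: you set up a name $\tau$ and a \LS hull but never say how the hypothesis that $Y\cap a\in W$ for all small $a\in W$ is consumed, and you explicitly defer the gluing step. The missing idea is to combine the hull with the covering property you have \emph{already proved}, not to collapse the name. Fix $\theta$ with $Y\subseteq W_\theta$, a name $\dot Y$, and take $X\prec V_\beta$ containing $\dot Y$, $\bbP$, $W_\theta$ and the relevant fragment of the $W$-forcing relation as elements, with $V_\gamma\subseteq X$ for some $\gamma>\mathrm{rank}(\bbP)$ (so $\bbP\subseteq X$) and with transitive collapse in $V_\ka$. The collapse gives $\norm{X\cap W_\theta}^V<\ka$, so by covering there is $a\in W$ with $X\cap W_\theta\subseteq a$ and $\norm{a}^W<\ka$, hence $b:=Y\cap a\in W$. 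Pick $p\in G$ forcing $\dot Y\cap\check a=\check b$. If some $q,q'\le p$ disagreed about $\check x\in\dot Y$ for some $x\in W_\theta$, elementarity of $X$ (which contains $q,q'$) would produce such an $x$ inside $X\cap W_\theta\subseteq a$, contradicting that both extend $p$; hence $p$ decides $\check x\in\dot Y$ for every $x\in W_\theta$ and $Y=\{x\in W_\theta \mid p\Vdash\check x\in\dot Y\}\in W$. Finally, your proposed shortcut of first showing that $\ka$ is a \LS cardinal in $W$ is not justified by Theorem \ref{2.2}: its downward direction needs two cardinals $\ka<\la$ forced to be \LS and yields only that the larger one is \LS in the ground. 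So you cannot sidestep the issue that way; the hull must be built in $V$ and then covered from inside $W$.
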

Let us say that a model $M$ of $\ZF$ is a \emph{pseudo-ground}
if $M$ has  the $\alpha$-norm covering and the $\alpha$-norm approximation properties for $V$
for some $\alpha$.

The following is immediate:
\begin{lemma}\label{2.7}
Let $M \subseteq N \subseteq V$ be models of $\ZF$, and $\alpha$ an ordinal.
\begin{enumerate}
\item If $M$ has the $\alpha$-norm covering and the $\alpha$-norm approximation properties for $N$,
and $N$  for $V$,
then $M$ has the $\alpha$-norm covering and the $\alpha$-norm approximation properties for $V$.
\item If $M$ has the $\alpha$-norm covering and the $\alpha$-norm approximation properties for $V$,
then $M$ has the $\alpha$-norm covering and the $\alpha$-norm approximation properties for $N$.
\end{enumerate}
\end{lemma}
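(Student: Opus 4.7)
The plan is to handle (2) by a direct comparison of norms computed in $N$ versus in $V$, and to prove (1) by threading through $N$ and taking intersections with $M$. The principal conceptual obstacle in (1) is that a set $X \subseteq M$ in $V$ need not belong to $N$, so $M$'s covering property for $N$ cannot be invoked directly on $X$; the fix is to first cover $X$ in $N$ and then intersect the cover with $M$.

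For (2), observe that a surjection $f : N_\beta \to X$ in $N$ with $\beta < \alpha$ extends (by filling in the points of $V_\beta \setminus N_\beta$ arbitrarily) to a surjection in $V$, so $\norm{X} \le \norm{X}^N$. Covering for $N$ is then a special case of covering for $V$: if $X \in N$ with $X \subseteq M$ and $\norm{X}^N < \alpha$, then $\norm{X} < \alpha$, and $M$'s covering for $V$ furnishes the required $Y$. Approximation for $N$ is an even more direct specialisation of approximation for $V$, since the hypothesis and conclusion are literally the same.

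For covering in (1), given $X \subseteq M$ with $\norm{X} < \alpha$, apply $N$'s covering for $V$ (legitimate because $X \subseteq N$) to obtain $Y' \in N$ with $X \subseteq Y'$ and $\norm{Y'}^N < \alpha$. Set $X' = Y' \cap M$; this is a set of $N$ because $M$ is a class of $N$ under the paper's conventions, and $X \subseteq X' \subseteq M$. A surjection onto $Y'$ in $N$ can be modified (sending points whose image falls outside $M$ to a fixed element of $X'$) to yield a surjection onto $X'$ in $N$, so $\norm{X'}^N < \alpha$. Then $M$'s covering for $N$ applied to $X'$ produces the desired $Y \in M$ with $X \subseteq X' \subseteq Y$ and $\norm{Y}^M < \alpha$.

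Approximation in (1) proceeds in two stages. Given $Y \subseteq M$ with $Y \cap a \in M$ for every $a \in M$ of $M$-norm $<\alpha$, I first establish $Y \in N$ via $N$'s approximation for $V$: for $b \in N$ with $\norm{b}^N < \alpha$, apply $M$'s covering for $N$ to $b \cap M$ to obtain $Y' \in M$ with $b \cap M \subseteq Y'$ and $\norm{Y'}^M < \alpha$. By hypothesis $Y \cap Y' \in M \subseteq N$, and a short computation using $Y \subseteq M$ and $b \cap M \subseteq Y'$ shows $Y \cap b = (Y \cap Y') \cap b \in N$. Hence $Y \in N$, and $M$'s approximation for $N$ applied to the unchanged hypothesis then delivers $Y \in M$.
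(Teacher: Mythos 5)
Your proof is correct; the paper itself offers no argument (it declares the lemma ``immediate''), and what you have written is the standard, intended verification --- in particular the two key moves, intersecting the $N$-cover $Y'$ with $M$ to get a set of $N$ to which $M$'s covering for $N$ applies, and using $M$'s covering for $N$ to reduce $Y\cap b$ to $(Y\cap Y')\cap b$ in the approximation step, are exactly what is needed. Your parenthetical appeal to $M$ being a class of $N$ (so that $Y'\cap M$ and $b\cap M$ are sets of $N$) is the right thing to flag: the lemma's statement is silent on this, but it is implicit in the paper's conventions and holds in every application.
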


\begin{define}
Let $\CLS$ denote the assertion
that ``there is a  proper class of \LS cardinals''.
\end{define}
By Theorem \ref{2.2},
$\CLS$ is absolute between $V$, all generic extensions, and all grounds of $V$.

\begin{thm}[\cite{U2}]\label{2.9}
Suppose $\CLS$.
Then all pseudo-grounds are uniformly definable:
There is a first order formula $\varphi'(x,y)$ of set-theory such that:
\begin{enumerate}
\item For every $r \in V$, the class $W_r'=\{x \mid \varphi'(x,r) \}$ is a pseudo-ground
of $V$ with $r \in W_r'$.
\item For every pseudo-ground $W$ of $\ZF$,
there is $r$ with $W=W_r'$.
\end{enumerate}
\end{thm}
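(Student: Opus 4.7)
The plan is to adapt the Fuchs--Hamkins--Reitz uniform definability of grounds to the $\ZF$ / pseudo-ground setting, using Theorem \ref{2.5} as the uniqueness engine and $\CLS$ to guarantee a supply of \LS cardinals above any given witness.

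I would let the parameter $r$ code a triple $(\alpha, \kappa, w)$ where $\alpha$ is an ordinal, $\kappa > \alpha$ is a \LS cardinal, and $w$ is a transitive subset of $V_\kappa$ intended to be $M \cap V_\kappa$ for some pseudo-ground $M$. Then $\varphi'(x, r)$ should assert that $x$ belongs to the \emph{unique} transitive class $M \supseteq \ON$ which (a) is a model of $\ZF$, (b) satisfies $M_\kappa = w$, and (c) has the $\alpha$-norm covering and the $\alpha$-norm approximation properties for $V$. Uniqueness of such an $M$ is precisely Theorem \ref{2.5}. If $r$ is malformed, or if no such $M$ exists, set $W_r' = V$; this is trivially a pseudo-ground of itself (take $\alpha = 0$).

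The core step is expressing ``$x \in M$'' first-order from $r$ alone. I would use the approximation property to reduce membership in $M$ to a family of local tests: $x \in M$ iff $x \cap a \in M$ for every $a \in M$ with $\norm{a}^M < \alpha$. Each such $a$ is the range of some surjection $f \in M$ with $\dom(f) = V_\alpha^M \in M_\kappa = w$, so the collection of approximating sets is describable by a recursion starting from $w$. Building $M$ rank by rank above $\kappa$ via this recursion---using the covering property to supply the required small covers inside $V$, and the approximation property to close under intersections---yields a first-order formula that recovers $M$ without explicit self-reference. Verification of clauses (1) and (2) of the theorem is then routine: (1) holds since the recursion always produces some pseudo-ground (or the default $V$); (2) holds because, given a pseudo-ground $W$ with witness $\alpha$, $\CLS$ provides a \LS cardinal $\kappa > \alpha$ (Theorem \ref{2.2} giving the right absoluteness), and the parameter $r = (\alpha, \kappa, W \cap V_\kappa)$ recovers $W$ by uniqueness.

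The main obstacle is formalising this level-by-level recursion into a single first-order schema that remains faithful to $M$ at every rank. The delicate point is that a set $a \in M$ with $\norm{a}^M < \alpha$ may have rank far above $\kappa$, so the approximation $x \cap a$ cannot be read off from $w$ directly; one must argue that $a$ is recoverable from some surjection whose domain lies in $w$ and whose graph is pinned down by the already-constructed lower levels of $M$, and that no spurious ``ghost'' class can pass the same local tests. Once this recursion is isolated, the remainder of the proof is a straightforward combination of Theorems \ref{2.5} and \ref{2.6} with the forcing-absoluteness of $\CLS$ from Theorem \ref{2.2}.
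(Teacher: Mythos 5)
Your parameter coding $(\alpha,\kappa,w)$ with $w=M\cap V_\kappa$ and your use of Theorem \ref{2.5} as the uniqueness engine match the paper exactly. But the step you yourself flag as ``the main obstacle''---turning ``$x$ belongs to the unique class $M$ with $M_\kappa=w$ and the $\alpha$-norm properties'' into a first order formula---is a genuine gap, and the mechanism you sketch for closing it does not work. The membership test ``$x\in M$ iff $x\cap a\in M$ for every $a\in M$ with $\norm{a}^M<\alpha$'' quantifies over $a\in M$ and asks whether $x\cap a\in M$, where both $a$ and $x\cap a$ can have rank equal to or above that of $x$; so this is not a recursion on rank but a fixed-point condition, and nothing in your sketch singles out the intended fixed point over a ``ghost'' class that happens to satisfy the same local tests. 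Your proposed repair---that $a$ is the range of a surjection with domain in $w$ whose graph is ``pinned down by the already-constructed lower levels''---fails for the reason you half-acknowledge: the graph of that surjection has essentially the same rank as $a$, so it is not available at lower levels.

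The paper avoids this entirely by quantifying over \emph{set-sized truncations} of the candidate class rather than over its elements. For $r=\seq{X,\kappa,\alpha}$, one requires that for every cardinal $\lambda>\kappa$ with $V_\lambda$ a model of a sufficiently large fragment of $\ZF$ there is a \emph{unique} transitive set $X^{r,\lambda}$ of height $\lambda$ with $(X^{r,\lambda})_\kappa=X$ and the $\alpha$-norm covering and approximation properties \emph{for $V_\lambda$}; then $W_r'=\bigcup_{\lambda>\kappa}X^{r,\lambda}$, and $W_r'=V$ if $r$ is malformed or the uniqueness clause fails. Since each $X^{r,\lambda}$ is a set, ``there exists a unique such $X^{r,\lambda}$'' is first order, and Theorem \ref{2.5} (applied locally, which is where $\CLS$ supplies \LS\ cardinals cofinally) gives both the uniqueness and the coherence of the $X^{r,\lambda}$ as $\lambda$ varies. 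If you replace your rank-by-rank recursion with this union of local approximations, the rest of your argument (clauses (1) and (2), the default case $W_r'=V$) goes through as you describe.
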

We sketch the proof since we will need to know how to define $\varphi'$ and $W'_r$ later.
\begin{proof}[Sketch of the proof]
For $r \in V$, suppose $r$ fulfills the following properties:
\begin{enumerate}
\item $r$ is of the form $\seq{X, \ka, \alpha}$ where $\ka$ is a \LS cardinal,
$\alpha< \ka$, and $X$ is a transitive set with $X \cap \mathrm{ON}=\ka$.
\item For each cardinal $\la>\ka$, if $V_\la$ is a model of a sufficiently large
fragment of $\ZF$, then there is a unique transitive model $X^{r,\la}$ of
a sufficiently large fragment of $\ZF$ such that
$X^{r,\la} \cap \mathrm{ON}=\la$, $(X^{r,\la})_\ka=X$, and 
$X^{r,\la}$ has the $\alpha$-norm covering and the $\alpha$-norm approximation properties for $V_\la$.
\end{enumerate}
Then let $W_r'=\bigcup_{\la>\ka} W^{r,\la}$.
Otherwise, let $W_r'=V$.
By Theorem  \ref{2.5},
one can check that the collection $\{W_r' \mid r\in V\}$ is as required.
\end{proof}
Using this, we have the uniform definability of all grounds under $\CLS$.
\begin{thm}[\cite{U2}]\label{unif. def.  grounds}
Suppose $\CLS$.
Then there is a first order formula $\varphi(x,y)$ of set-theory such that:
\begin{enumerate}
\item For every $r \in V$, the class $W_r=\{x \mid \varphi(x,r) \}$ is a ground
of $V$ with $r \in W_r$.
\item For every ground $W$ of $\ZF$,
there is $r$ with $W=W_r$.
\end{enumerate}
\end{thm}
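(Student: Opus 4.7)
The plan is to bootstrap from Theorem \ref{2.9}, which already gives a uniform first-order definition $\varphi'(x,s)$ of all pseudo-grounds, by enlarging the parameter to also record a poset and generic filter witnessing that the pseudo-ground is an actual ground. The key input is Theorem \ref{2.6}: under $\CLS$, any ground $W$ is automatically a pseudo-ground, because any witnessing poset $\bbP \in W$ lies in $W_\ka$ for some \LS cardinal $\ka$, so $W$ then has the $\ka$-norm covering and approximation properties for $V$ and therefore equals $W'_s$ for some parameter $s$ coming from Theorem \ref{2.9}.

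Concretely, call a parameter $r$ \emph{good} if $r = \seq{s,\bbP,G}$ and, reading $W'_s = \{z : \varphi'(z,s)\}$ everywhere as an abbreviation unfolded via $\varphi'$, the following hold in $V$: (a) $W'_s$ is a pseudo-ground, hence a model of $\ZF$; (b) $\bbP \in W'_s$ and $W'_s$ thinks $\bbP$ is a poset; (c) $G \subseteq \bbP$ is a filter meeting every dense $D \subseteq \bbP$ with $D \in W'_s$; (d) for every $y \in V$ there is a $\bbP$-name $\tau \in W'_s$ with $\tau^G = y$. If $r$ is good, I set $\varphi(x,r) \Leftrightarrow \varphi'(x,s)$; if not, I set $W_r := V$, so $\varphi(x,r)$ holds for all $x$. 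Each of (a)--(d) unfolds to a first-order formula in $\{\in\}$, since $W'_s$ is definable from $s$, the quantifications over $D$ and $\tau$ range over sets, and the evaluation $\tau^G$ is defined by set-recursion from $\tau$ and $G$.

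For clause (1), if $r$ is bad then $W_r = V$ is trivially a ground; if $r$ is good then (a)--(d) say precisely that $W_r = W'_s$ is a model of $\ZF$, $\bbP \in W_r$ is a poset, $G$ is $(W_r,\bbP)$-generic, and $V = W_r[G]$, so $W_r$ is a ground by definition. For clause (2), given a ground $W$ witnessed by some $\bbP \in W$ and a $(W,\bbP)$-generic $G$ with $V = W[G]$, pick an \LS cardinal $\ka$ with $\bbP \in W_\ka$ (possible by $\CLS$); by Theorem \ref{2.6} the model $W$ has the $\ka$-norm covering and approximation properties for $V$, so Theorem \ref{2.9} yields $s$ with $W = W'_s$, and then $r = \seq{s,\bbP,G}$ is good with $W_r = W$.

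There is no serious obstacle: given the pseudo-ground formula from Theorem \ref{2.9}, the result is essentially a formal combination with the covering/approximation characterization of grounds in Theorem \ref{2.6}. The only point requiring any care is to confirm that the clauses encoding ``$G$ is generic over $W'_s$'' and ``$V = W'_s[G]$'' are genuinely first-order in $V$; this is immediate once one notes that $W'_s$ is definable from the set parameter $s$ via $\varphi'$ and that the forcing-evaluation map $\tau \mapsto \tau^G$ is a set-recursive function, so neither clause requires quantification over classes.
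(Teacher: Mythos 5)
Your proposal is essentially the paper's own argument: the paper likewise takes the definable family of pseudo-grounds from Theorem \ref{2.9} and sets $W_r=W_r'$ when $W_r'$ is a ground of $V$ and $W_r=V$ otherwise, with Theorem \ref{2.6} supplying the converse direction exactly as you use it. The one defect is your choice of parameter: with $r=\seq{s,\bbP,G}$ you lose the clause ``$r \in W_r$'' of the statement, since the generic filter $G$ is never an element of a proper ground $W'_s$. This is repaired by existentially quantifying the witnesses $\bbP$ and $G$ inside the formula (they are sets in $V$, so the definition stays first order) and keeping $s$ itself as the parameter, which is in effect what the paper does.
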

\begin{proof}[Sketch of the proof]
Let $\{W_r' \mid r \in V\}$ be the definable collection of 
pseudo-grounds of $V$ as in Theorem \ref{2.9}.
If $W_r'$ is a ground of $V$ then put $W_r=W_r'$,
and $W_r=V$ otherwise.
By Theorem \ref{2.6},
the definable collection $\{W_r \mid r \in \}$ comprises all grounds of $V$.
\end{proof}
\begin{note}\label{notedayo}
In addition,  we have the following by Theorem \ref{2.2}:
Suppose $\CLS$. If the formula $\varphi$ defines all grounds as above, 
then, in all generic extensions, 
the formula $\varphi$ defines all its  grounds by the same way.
\end{note}

\begin{lemma}\label{2.10}
Suppose $\CLS$.
Then there is a first order formula $\psi(x,y,z)$ in the language of set-theory
such that for every poset $\bbP$, $\bbP$-name $\dot s$, and a set $t$,
$\Vdash_\bbP$``\,$W_{\dot s}^{V[\dot G]} =\check W_t'$''
if and only if $\psi(\bbP, \dot s,t)$ holds.
Hence 
the statement $\Vdash_\bbP$``\,$\check W_t'$ is a ground of the universe'' is 
a first order assertion as $\exists \bbP \exists \dot s \psi(\bbP, \dot s, t)$.
\end{lemma}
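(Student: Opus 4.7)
The plan is to translate the class-equality ``$W_{\dot s}^{V[\dot G]} = \check W_t'$'' into a formula of the forcing language extended by a class-symbol and invoke the standard forcing theorem for that extended language. By Theorem~\ref{2.2} together with Note~\ref{notedayo}, $\CLS$ persists in every generic extension $V[G]$, and the formula $\varphi$ of Theorem~\ref{unif. def.  grounds} uniformly defines all grounds of $V[G]$ there. Consequently the assertion ``$W_{\dot s}^{V[\dot G]} = \check W_t'$'' unfolds to
$$\forall \dot x\,\bigl[\varphi(\dot x, \dot s) \leftrightarrow \dot x \in \check W_t'\bigr]$$
in the forcing language augmented by the class-symbol $\check W_t'$ for the $V$-definable class $W_t' = \{x \mid \varphi'(x,t)\}$.

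I then verify that the forcing relation of this formula is first order in $V$ with parameters $\bbP, \dot s, t$. The forcing relation for the set-theoretic part $\varphi(\dot x, \dot s)$ is first order by the standard forcing theorem. For the extended atomic part, by the definition recalled in the Preliminaries,
$$p \Vdash_\bbP \dot x \in \check W_t' \iff \{q \le p \mid \exists x\,(\varphi'(x,t) \wedge q \Vdash_\bbP x = \dot x)\} \text{ is dense below } p,$$
which is first order in $V$ because $\varphi'$ is. Combining by Boolean steps and the universal-quantifier step ($\Vdash_\bbP \forall \dot x\, \theta(\dot x)$ iff for every $\bbP$-name $\dot x$, $\Vdash_\bbP \theta(\dot x)$) produces the desired formula $\psi(\bbP, \dot s, t)$.

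For the ``Hence'' clause: by the uniform definability of grounds inside every generic extension (Theorem~\ref{unif. def.  grounds} applied in $V[G]$ via Note~\ref{notedayo}), a poset $\bbP$ forces ``$\check W_t'$ is a ground of the universe'' precisely when some $\bbP$-name $\dot s$ satisfies $\Vdash_\bbP$ ``$W_{\dot s}^{V[\dot G]} = \check W_t'$'', i.e.\ $\exists \dot s\,\psi(\bbP, \dot s, t)$. Quantifying over $\bbP$ then yields the announced first order form $\exists \bbP\,\exists \dot s\,\psi(\bbP, \dot s, t)$.

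The main obstacle is ensuring that enlarging the forcing language by $\check W_t'$ does not disturb first-order definability of $\Vdash_\bbP$ over $V$. This hinges on $W_t'$ being $V$-definable from the \emph{set} parameter $t$ via $\varphi'$, which makes the density condition for $p \Vdash_\bbP \dot x \in \check W_t'$ unfold into an honest first order formula over $V$; the remainder is a routine induction on formula complexity carried through by the forcing theorem.
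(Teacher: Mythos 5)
Your argument is correct in substance, but it takes a genuinely different route from the paper's. You treat ``$W_{\dot s}^{V[\dot G]}=\check W_t'$'' as a single sentence $\forall x\,[\varphi(x,\dot s)\leftrightarrow x\in\check W_t']$ of the forcing language extended by the class name $\check W_t'$, and then invoke definability of the forcing relation for that extended language (the atomic clause for $\dot x\in\check W_t'$ unfolds via $\varphi'$ and the parameter $t$, the quantifier clause via quantification over $\bbP$-names). The paper instead avoids the extended forcing language altogether: its $\psi$ only involves forcing statements about formulas of the pure language $\{\in\}$ (namely that $W_{\dot s}^{V[\dot G]}$ is a ground via a small poset, and that densely many conditions decide the set $X=(W_{\dot s}^{V[\dot G]})_\la$), and the class equality is recovered from the set-level equality $X=(W'_u)_\la$ by Theorems \ref{2.5}, \ref{2.6} and Lemma \ref{2.7}, i.e.\ by the fact that pseudo-grounds with the norm covering and approximation properties are determined by a rank-initial segment below a \LS cardinal. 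Your approach buys a shorter proof in which both directions of the equivalence are essentially tautological, at the cost of relying on the definability (not merely the truth lemma) of the extended forcing relation, which the Preliminaries set up but only sketch; the paper's approach needs only the ordinary definability lemma plus the $\CLS$-machinery it has already developed. One small caveat in your ``Hence'' step: for a \emph{fixed} $\bbP$, passing from ``$\Vdash_\bbP$ `$\check W_t'$ is a ground'\,'' to a \emph{single} name $\dot s$ with $\Vdash_\bbP$``$W_{\dot s}^{V[\dot G]}=\check W_t'$'' would require mixing names over an antichain, which is delicate without $\AC$; what one actually gets is a condition $q$ and a name $\dot s$ working below $q$, and then $\bbP\restriction q$ witnesses the outer existential $\exists\bbP\exists\dot s\,\psi(\bbP,\dot s,t)$ --- which is all that the lemma (and its use in Theorem \ref{3.7}) requires.
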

\begin{proof}
Let $\psi(\bbP,\dot s,t)$ be the following sentence:
There are \LS cardinals $\ka<\la$ which are limits of \LS cardinals and $\bbP \in V_\ka$
such that:
\begin{enumerate}
\item  $\Vdash_\bbP$``$W_{\dot s}^{V[\dot G]}$ is a ground of the universe $V[\dot G]$ via 
a poset $\bbQ \in (W_{\dot s}^{V[\dot G]})_\ka$''.
\item For every $p \in \bbP$, there is $q \le p$ and a set $X$
such that $q \Vdash_\bbP$``$X=(W_{\dot s}^{V[\dot G]})_\la$'',
$X=(W_{u}')_\la$ where $u=\seq{X, \la, \ka}$,
and $W_{u}'=W_t'$.
\end{enumerate}
We see that this $\psi$ works.

First, suppose $\Vdash_\bbP$``$W_{\dot s}^{V[\dot G]} =\check W_t'$''.
By $\CLS$,
there are large \LS cardinals $\ka<\la$ which are limits of \LS cardinals
such that  $\Vdash_\bbP$``$W_{\dot s}^{V[\dot G]}$ is a ground of the universe $V[\dot G]$ via 
poset $\bbQ \in (W_{\dot s}^{V[\dot G]})_\ka$''.
We see that $\ka$ and $\la$ witness $\psi(\bbP, \dot s, t)$.

By Theorems \ref{2.2} and \ref{2.6}, the following hold:
\begin{enumerate}
\item $\Vdash_\bbP$``$\ka$ and $\la$ are \LS cardinals''.
\item $\Vdash_\bbP$``$W_{\dot s}^{V[\dot G]}$ has the $\ka$-norm covering and 
the $\ka$-norm approximation properties for $V[\dot G]$''.
\item $\Vdash_\bbP$``$\check V$ has the $\ka$-norm covering and 
the $\ka$-norm approximation properties for $V[\dot G]$''.
\end{enumerate}
Note that whenever $G$ is $(V,\bbP)$-generic and $s=\dot s^G$,
we have $W_t'=W_s^{V[G]} \subseteq V \subseteq V[G]$.
Since $W_t'=W_s^{V[G]}$ has the $\ka$-norm covering and the $\ka$-approximation properties for $V[G]$,
we know that $W_t'=W_s^{V[G]}$
 has the $\ka$-norm covering and the $\ka$-approximation properties for $V$ by
Lemma \ref{2.7}.

Now take $p \in \bbP$.
Since $\Vdash_\bbP$``$W_{\dot s}^{V[\dot G]} =\check W_t' \subseteq \check V$'',
we can choose $q \le p$ and a set $X$ such that
$q \Vdash_\bbP$``$X= (W_{\dot s}^{V[\dot G]})_\la$''.
Put $u=\seq{X, \la, \ka}$. 
$W_t'$ has the $\ka$-norm covering and the $\ka$-norm approximation properties for $V$
and $X=(W_t')_\la$.
Hence $W_t'=W_u'$ by the definition of $W_u'$ and Theorem \ref{2.5}.

For the converse,
suppose $\psi(\bbP, \dot s, t)$ holds.
Fix \LS cardinals $\ka<\la$ witnessing $\psi(\bbP, \dot s, t)$.
Fix $p \in \bbP$, and take $q \le p$ and  a set $X$
such that 
$q \Vdash_x$``$X=(W_{\dot s}^{V[\dot G]})_\la$''.
Let $u=\seq{X, \la,\ka}$.
We know that $W_t'=W_u'$, $(W_u')_\la=X$, and $W_t'=W_u'$ has the $\ka$-norm covering and the $\ka$-norm approximation properties for $V$.

Now take a $(V,\bbP)$-generic $G$.
In $V[G]$, $\ka$ and $\la$ remain \LS cardinals,
and $W_s^{V[G]}$ has the $\ka$-norm covering and the $\ka$-norm approximation properties
for $V[G]$.
Moreover $V$ has the $\ka$-norm covering and the $\ka$-norm approximation properties for $V[G]$.
Then so does $W_u'$ for $V[G]$ by Lemma \ref{2.7}.
Applying Theorem \ref{2.5},
we have $W_t'=W_u'=W_s^{V[G]}$.
\end{proof}

\begin{define}[Blass \cite{B}]
The principle $\SVC$ (Small Violation of Choice)
is the assertion that there is a set $S$ such that for every set $X$, there is an ordinal
$\alpha$ and a surjection from $S \times \alpha$ onto $X$.
\end{define}

\begin{thm}[\cite{B}]
The following are equivalent:
\begin{enumerate}
\item $\SVC$ holds.
\item $\AC$ is forceable, that is,
there is a poset $\bbP$ which forces $\AC$.
\end{enumerate}
\end{thm}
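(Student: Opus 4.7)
The plan is to prove the two implications separately; both are due to Blass and neither is deep.

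For $(1) \Rightarrow (2)$, I would show that if $S$ witnesses $\SVC$, then the ``generic surjection'' forcing $\col(S)$ of finite partial functions $\om \to S$ forces $\AC$. If $G$ is $(V, \col(S))$-generic, then $\bigcup G$ is a function $\om \to S$ and, by density of $\{p \mid s \in \range(p)\}$ for each $s \in S$, is surjective; hence $S$ is well-orderable in $V[G]$, and so is $S \times \alpha$ for every ordinal $\alpha$. Given any $X \in V[G]$, fix a name $\dot X$ with $\dot X^G = X$, let $A = \dom(\dot X) \in V$, and apply $\SVC$ in $V$ to obtain a surjection $S \times \alpha \to A$ in $V$. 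Composing with $\dot y \mapsto \dot y^G$ gives a surjection in $V[G]$ from $S \times \alpha$ onto $\{\dot y^G \mid \dot y \in A\}$, which contains $X$. Since $S \times \alpha$ is well-orderable in $V[G]$, so is this superset (by picking minimal preimages), and therefore so is $X$.

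For $(2) \Rightarrow (1)$, I would take $S = \bbP$, where $\bbP$ is a poset forcing $\AC$. Given $X \in V$, since $\AC$ holds in every generic extension $V[G]$, there exist in $V$ a name $\dot f$, a condition $p_0 \in \bbP$, and an ordinal $\alpha$ with $p_0 \Vdash_\bbP$ ``$\dot f$ is a bijection from $\check \alpha$ onto $\check X$''. Define $h : \bbP \times \alpha \to X$ by $h(p,\beta) = x$ when $x$ is the unique element of $X$ with $p \Vdash_\bbP \dot f(\check \beta) = \check x$ (well-definedness is immediate since two such $x$ would be forced equal), and by $h(p,\beta)$ equal to a fixed default value otherwise. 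For surjectivity, given $x \in X$ take any generic $G \ni p_0$: then $\dot f^G(\beta) = x$ for some $\beta < \alpha$, and some $p \in G$ below $p_0$ decides this, yielding $h(p,\beta) = x$.

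The delicate step is $(1) \Rightarrow (2)$: a priori, a set in $V[G]$ need not be a surjective image of any single set from $V$, so we cannot apply $\SVC$ directly in the extension. The name-evaluation trick circumvents this by reducing each set $X \in V[G]$ to the \emph{ground-model} set $\dom(\dot X)$, and invoking $\SVC$ there. For the converse, one might worry about the maximum principle, which is unavailable without $\AC$; but we do not need it, since we only require the existence of \emph{some} name and condition forcing well-orderability of $\check X$, which is automatic from $V[G] \models \AC$ by the forcing theorem.
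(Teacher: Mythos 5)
The paper states this theorem with only a citation to Blass and gives no proof, so there is nothing to compare against line by line; your argument is correct and is essentially Blass's original one: collapsing the $\SVC$ witness $S$ to be countable makes every set a well-orderable image via names, and conversely the poset itself serves as the $\SVC$ witness by reading off a surjection from the forcing relation. No gaps.
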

By this theorem, we know that 
$\SVC$ is absolute between $V$, any ground, and any generic extension of $V$.

\begin{thm}[\cite{U2}]
Suppose $\SVC$ holds.
Then $\CLS$ holds, in particular all grounds of $V$ are uniformly definable.
\end{thm}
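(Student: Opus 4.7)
The plan is to combine Blass's theorem with the absoluteness of the LS property encoded in Theorem \ref{2.2}. Assuming $\SVC$ with witness set $S$, take $\bbP = \col(S)$. By Blass's theorem this $\bbP$ forces $\AC$, and as recorded just before Theorem \ref{1.5}, $\col(S)$ is weakly homogeneous; the standard permutation-of-$\omega$ argument confirming weak homogeneity does not require any choice on $S$.

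Let $G$ be $(V, \bbP)$-generic. In $V[G]$ the Axiom of Choice holds, and therefore $\CLS$ holds in $V[G]$ (this is the remark ``$\CLS$ follows from $\AC$'' after the Main Theorem: under $\AC$, the usual downward \LS theorem produces, for any $\beth$-fixed point $\ka$ of sufficient cofinality, elementary submodels $X \prec V_\beta$ of cardinality less than $\ka$ with all required closure and whose transitive collapses lie in $V_\ka$; the class of such $\ka$ is proper).

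To pull $\CLS$ back to $V$, fix an arbitrary ordinal $\eta \in V$. Working in $V[G]$, choose LS cardinals $\ka < \la$ with $\mathrm{rank}(\bbP), \eta < \ka$. Any cardinal of $V[G]$ is a cardinal of $V$ (otherwise a $V$-bijection would collapse it in $V[G]$), so $\ka$ and $\la$ are cardinals of $V$. The assertion ``$\ka$ is an \LS cardinal'' is a first-order statement of the universe whose only parameter is $\ka \in V$, so by weak homogeneity of $\col(S)$ its boolean value is either $\mathbf{0}$ or $\mathbf{1}$; since it holds in $V[G]$ we conclude $\Vdash_\bbP$ ``$\ka$ is \LS'', and likewise $\Vdash_\bbP$ ``$\la$ is \LS''. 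Because $\bbP \in V_\ka$, the converse half of Theorem \ref{2.2} applies and yields that $\la$ is \LS in $V$.

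Since $\eta$ was arbitrary, $V$ has \LS cardinals above every ordinal, so $V \models \CLS$; the uniform definability of all grounds is then immediate from Theorem \ref{unif. def. grounds}. The main obstacle is the passage from ``some condition of $\bbP$ forces $\la$ to be \LS'' to the unconditional forcing that Theorem \ref{2.2} requires: a generic extension by $\bbP$ would otherwise give only a name for an \LS cardinal rather than a specific $V$-ordinal. The choice $\bbP = \col(S)$, whose weak homogeneity is available without appealing to $\AC$ on $S$, is precisely what removes this obstacle.
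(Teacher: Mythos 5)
Your argument is correct, and it is essentially the argument of \cite{U2}, which this paper cites without reproducing: force $\AC$ (Blass), obtain a proper class of \LS cardinals in the extension, and pull them back to $V$ via the converse direction of Theorem \ref{2.2}. You correctly identified the one delicate point, namely upgrading ``some $p\in G$ forces $\check\ka,\check\la$ to be \LS'' to the unconditional $\Vdash_\bbP$ that Theorem \ref{2.2} requires; your use of the weak homogeneity of $\col(S)$ (which indeed needs no choice) handles this cleanly, the alternative being simply to replace $\bbP$ by its restriction below such a $p$, which forces the same statements and still lies in $V_\ka$. The only cosmetic caveat is that the paper's statement of Blass's theorem asserts only that \emph{some} poset forces $\AC$, so your claim that $\col(S)$ itself does so leans on Blass's proof rather than the quoted statement --- but that claim is true and your sketch of it is standard.
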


\section{Symmetric grounds}
In this section, we study a characterization of
symmetric grounds without taking \emph{some} generic extension,
and we prove the uniform definability of symmetric grounds under $\CLS$.

\begin{lemma}\label{3.1}
Let $M, N$ be models of $\ZF$ and suppose $M$ is a ground of $V$,
say $V=M[G]$.
If $M \subseteq N \subseteq V$ and $N$ is a symmetric submodel of $V=M[G]$,
then $N$ is of the form $M(X)$ for some $X$.
\end{lemma}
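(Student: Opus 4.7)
The plan is to reduce Lemma \ref{3.1} to a short chain of applications of the Grigorieff results already quoted. Since $M$ is a ground of $V$, there is a poset $\bbP \in M$ and an $(M,\bbP)$-generic $G$ with $V = M[G]$, and the hypothesis that $N$ is a symmetric submodel of $V = M[G]$ is exactly the form required by Theorem \ref{1.6}. That theorem therefore produces an $X \in N$ with $N = \HOD_{M(X)}$, so the remaining task is to rewrite $\HOD_{M(X)}$ as $M(Z)$ for some set $Z$.

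For this, first note that $M(X)$ is an intermediate ground of $V$: we have $M \subseteq M(X) \subseteq V$, and $M(X)$ trivially satisfies clause (2) of Theorem \ref{1.2} (with $X$ itself as witness), so by that theorem $V$ is a generic extension of $M(X)$; hence $V = M(X)(H)$ for some $H \in V$. I now apply Theorem \ref{1.3} with the pair $(M(X), H)$ in place of $(M, X)$, obtaining a set $Y \subseteq M(X)$ with $\HOD_{M(X)} = M(X)(Y)$. Finally, $M(X)(Y) = M(\seq{X,Y})$, because by the minimality characterization stated just before Theorem \ref{1.2} both sides are the smallest $\ZF$-model containing $M$, $X$ and $Y$. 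Taking $Z = \seq{X,Y}$ completes the argument.

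The only delicate point is the very first step: one must read the phrase ``symmetric submodel of $V = M[G]$'' in the lemma as matching the precise hypothesis of Theorem \ref{1.6}, namely that the symmetric system lives in $M$ on $\mathrm{ro}(\bbP)^M$ for the specific $\bbP$ and $G$ realizing $V = M[G]$. As Note \ref{2.7-} emphasizes, this is strictly stronger than merely being a symmetric extension of $M$, so the hypothesis of the lemma should be understood in this stricter sense. Once that reading is accepted, everything else is a direct composition of the Grigorieff machinery.
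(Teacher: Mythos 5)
Your proof is correct and follows essentially the same route as the paper: apply Theorem \ref{1.6} to get $N=\HOD_{M(X)}$, then Theorem \ref{1.3} with base model $M(X)$ to rewrite this as $M(X)(Y)=M(\{X,Y\})$. The only cosmetic difference is that you route through Theorem \ref{1.2} to see that $V$ has the form $M(X)(H)$, whereas the paper just observes directly that $V=M[G]=M(X)(G)$; your closing remark about reading ``symmetric submodel of $V=M[G]$'' as being via the specific poset witnessing $V=M[G]$ matches the intended reading (cf.\ Note \ref{2.7-}).
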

\begin{proof}
By Theorem \ref{1.6}, 
$N$ is of the form $\HOD_{M(Y)}$ for some $Y$.
Since $V=M[G]=M(Y)(G)$, we know that $N=\HOD_{M(Y)}$ is of the form $M(Y)(Z)$ for some $Z$ with $Z \subseteq M(Y)$ by Theorem \ref{1.3}. Then $N=M(\{Y,Z\})$.
\end{proof}

\begin{lemma}\label{3.2}
Let $M, N$ be models of $\ZF$ and suppose $M$ is a ground of $V$, say $V=M[G]$.
Suppose  $M \subseteq N \subseteq V$, 
$M$ is a class of $N$, and $N$ is of the form $M(X)$ for some $X$.
Then there is a large limit $\alpha$ such that
whenever $G'$ is $(N, \col(N_\alpha))$,
there is an $(M, \col(M_\alpha))$-generic $H$
such that $N[G']=M[H]$ and $N$ is a symmetric submodel of $M[H]$.
\end{lemma}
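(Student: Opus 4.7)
The overall plan is to reduce Lemma \ref{3.2} to Grigorieff's characterization of symmetric submodels (Theorem \ref{1.6}) by exhibiting $N[G']$ as a generic extension $M[H]$ and then exploiting the weak homogeneity of the collapse forcing.

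First, I would apply Theorem \ref{1.2} to the chain $M \subseteq N = M(X) \subseteq V$ to obtain a poset $\bbQ \in N$ and an $(N,\bbQ)$-generic $G_0$ with $V = N[G_0]$, so that both $M$ and $N$ are grounds of $V$. I would then fix $\alpha$ to be a limit ordinal with $\alpha > \mathrm{rank}(\bbP) \cdot \om$ and $\alpha > \mathrm{rank}(\bbQ) \cdot \om$, and large enough that $X \in V_\alpha$; this ensures that both $\col(M_\alpha)$ and $\col(N_\alpha)$ absorb $\bbP$ and $\bbQ$ and that $\mathrm{tc}(X) \subseteq N_\alpha$.

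The core step is, for any $(N,\col(N_\alpha))$-generic $G'$, to exhibit an $(M,\col(M_\alpha))$-generic $H \in N[G']$ with $M[H] = N[G']$. In $N[G']$ the set $N_\alpha$ is countable via the generic surjection $\bigcup G' \colon \om \to N_\alpha$, so $M_\alpha$ and $\mathrm{tc}(X) \subseteq N_\alpha$ are countable as well. The plan is to build $H$ by a diagonal construction inside $N[G']$ that simultaneously (i) meets every dense subset of $\col(M_\alpha)$ belonging to $M$, and (ii) codes enough information for both $X$ and $G'$ to be recoverable in $M[H]$. Item (ii) together with $M$-genericity of $H$ yields $N = M(X) \subseteq M[H]$ and $G' \in M[H]$, hence $N[G'] \subseteq M[H]$; the reverse inclusion is immediate from $H \in N[G']$.

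For the symmetric submodel claim, I would invoke Theorem \ref{1.6} with parameter $Y = X$. Since $M(X) = N$, the criterion becomes $\HOD_N^{M[H]} = N$. Because $M[H] = N[G']$ is a generic extension of $N$ via the weakly homogeneous forcing $\col(N_\alpha)$, any element of $N[G']$ which is hereditarily definable from parameters in $N \cup \ON$ has its defining formula decided by the trivial condition of $\col(N_\alpha)$ and therefore lies in $N$. This gives $\HOD_N^{N[G']} = N$, as required.

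The main obstacle is the construction of $H$ in the core step. Since $\col(M_\alpha)$ is not a complete subforcing of $\col(N_\alpha)$, naive candidates such as $H = G' \cap \col(M_\alpha)$, or the restriction of $\bigcup G'$ to its $M_\alpha$-preimage re-enumerated by $\om$, fail to yield an $M$-generic. The delicate point is to choose $\alpha$ so that the family of $M$-dense subsets of $\col(M_\alpha)$ (a subfamily of $\p^M(\col(M_\alpha))$) becomes countably absorbed inside $N[G']$, and then to perform the diagonalization in $N[G']$ compatibly with the required codings of $X$ and $G'$, so that the three constraints are met simultaneously.
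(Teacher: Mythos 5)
Your overall skeleton matches the paper's: first get that $V$ is a generic extension of $N=M(X)$ via Theorem \ref{1.2}, then realize $N[G']$ as $M[H]$ for a collapse-generic $H$, and finally verify $N=(\HOD_{M(X)})^{M[H]}$ using Theorem \ref{1.6} together with the weak homogeneity and ordinal definability of $\col(N_\alpha)$ in $N$. The last step of your argument is essentially identical to the paper's and is fine.

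The genuine gap is in your ``core step.'' You propose to build the $(M,\col(M_\alpha))$-generic $H$ inside $N[G']$ by a diagonal construction meeting every dense subset of $\col(M_\alpha)$ that belongs to $M$. This cannot work as described: $\col(N_\alpha)$ only makes $N_\alpha$ countable, whereas the dense subsets of $\col(M_\alpha)$ lying in $M$ form a subfamily of $\p(\col(M_\alpha))\cap M$, which lives at rank roughly $\alpha+\om$ and in general remains uncountable in $N[G']$ (e.g.\ already for $M=N=L$ its cardinality in $L[G']$ is that of $V_{\alpha+1}^L$, which the collapse of $V_\alpha^L$ does not touch). So an $\om$-length diagonalization cannot meet all of them, and you yourself flag this as unresolved (``the delicate point is to choose $\alpha$ so that the family \ldots becomes countably absorbed''). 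Moreover, even granting genericity, arranging $M[H]=N[G']$ exactly requires a mutual-coding argument you only gesture at. This step is precisely the content of Grigorieff's absorption theorem, which the paper quotes as Theorem \ref{1.5} and rephrases as Lemma \ref{1.5+}: applying it to the two grounds $M$ and $N$ of $V$ yields a large limit $\alpha$ such that every $(V,\col(V_\alpha))$-generic $G$ gives $V[G]=M[H]=N[H']$ with $H,H'$ generic for $\col(M_\alpha),\col(N_\alpha)$ respectively; one then transfers this to \emph{arbitrary} $(N,\col(N_\alpha))$-generics $G'$ via the forcing theorem in $N$. You should replace your diagonalization by an appeal to Lemma \ref{1.5+} (or else supply a full proof of the absorption theorem, which is substantially harder than a density diagonalization).
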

\begin{note}
As in Note \ref{1.4}, the conclusion of this proposition
can be read as follows:
There is some $\alpha$ such that,
in $N$, $\col(N_\alpha)$ forces ``
there is an $(\check M, \col(M_\alpha))$-generic $H$
such that $\check {M}[H]$ is the  universe,
and $\check N$ is a symmetric submodel of $\check M[H]$''.
\end{note}

\begin{proof}
By Theorem \ref{1.2}, $V=M[G]$ is a generic extension of $N$.
Applying Lemma \ref{1.5+}, there is a large limit $\alpha$ such that
if $G$ is $(V, \col(V_\alpha))$-generic,
there is an $(M, \col(M_\alpha))$-generic $H$ and
an $(N, \col(N_\alpha))$-generic $H'$ 
with $V[G]=M[H]=N[H']$.
Then, in $N$, $\col(N_\alpha)$ forces that
``there is an $(\check M, \col(M_\alpha))$-generic $H$ such
that $\check M[H]$ is the universe''.
Hence whenever $G'$ is $(N, \col(N_\alpha))$-generic, 
there is an $(M, \col(M_\alpha))$-generic $H$
such that $N[G']=M[H]$.
We shall see that $N$ is of the form $(\HOD_{M(X)})^{M[H]}$, hence $N$ is a symmetric submodel of $M[H]$ by Theorem \ref{1.6}.
Since $N=M(X)$, it is clear that $N \subseteq (\HOD_{M(X)})^{M[H]}$.
Because $\col(N_\alpha)$ is a weakly homogeneous ordinal definable poset in $N$,
we have $(\HOD_{M(X)})^{M[H]}=(\HOD_N)^{N[G']} \subseteq N$. 
\end{proof}


We have the following observation.
Roughly speaking, it asserts that $W$ is a symmetric ground of $V$ if and only if 
$W$ is a ground of \emph{some} generic extension of $V$.

\begin{prop}\label{3.4}
Let $M\subseteq N$ be  models of $\ZF$, and suppose $M$ is a class of $N$.
\begin{enumerate}
\item Suppose $M$ is a ground of $V$, say $V=M[G]$.
If $N$ is a symmetric submodel of $M[G]=V$,
then there are a poset $\bbP\in N$ and $\bbQ \in M$
such that, in $N$, $\bbP$ forces ``there is an $(\check M, \bbQ)$-generic $H$
such that $\check M[H]$ is the  universe''.
\item If there are a poset $\bbP \in N$ and $\bbQ \in M$
such that, in $N$, $\bbP$ forces ``there is an $(\check M, \bbQ)$-generic $H$
such that $\check M[H]$ is the  universe'',
then $N$ is a symmetric submodel of some generic extension of $M$.
\end{enumerate}
\end{prop}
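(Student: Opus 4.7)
The plan is to treat the two implications separately, invoking Lemmas \ref{3.1} and \ref{3.2} as black boxes.

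For (1), I start from the hypothesis that $M$ is a ground of $V$, say $V=M[G]$, and $N$ is a symmetric submodel of $M[G]$ with $M\subseteq N\subseteq M[G]$. Lemma \ref{3.1} immediately gives $N=M(X)$ for some $X$. Since $M$ is a class of $N$ by assumption, all the hypotheses of Lemma \ref{3.2} are satisfied, and so there is a large limit ordinal $\alpha$ such that, working in $N$, $\col(N_\alpha)$ forces the existence of an $(\check M,\col(M_\alpha))$-generic $H$ with $\check M[H]$ equal to the universe. Taking $\bbP=\col(N_\alpha)$ and $\bbQ=\col(M_\alpha)$ finishes this direction.

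For (2), I work in some outer model containing an $(N,\bbP)$-generic filter $G'$. The forcing hypothesis produces an $(M,\bbQ)$-generic $H$ with $N[G']=M[H]$, so $M$ is a ground of $M[H]$ and $M\subseteq N\subseteq M[H]$. Applying Theorem \ref{1.2} inside $M[H]$, the fact that $M[H]=N[G']$ is a generic extension of $N$ yields $N=M(X)$ for some $X\in N$. I then re-invoke Lemma \ref{3.2}, but with $V$ now played by $M[H]$: this produces a large limit $\alpha$ so that, forcing further over $N$ with $\col(N_\alpha)$, we obtain an $(M,\col(M_\alpha))$-generic filter $H'$ making $N$ a symmetric submodel of $M[H']$. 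Since $M[H']$ is a generic extension of $M$, this gives the desired conclusion.

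The principal subtlety lies in (2): one might hope to conclude directly, via Theorem \ref{1.6}, that $N$ is a symmetric submodel of $M[H]$ itself through $\mathrm{ro}(\bbQ)^M$. This would require identifying $N$ with $\HOD^{M[H]}_{M(Y)}$ for some $Y$, and the containment $\HOD^{M[H]}_{M(Y)}\subseteq M(Y)$ is not automatic for a general forcing $\bbQ$. The passage to a Levy collapse, as in the proof of Lemma \ref{3.2}, sidesteps this by exploiting the weak homogeneity and ordinal definability of $\col$; that is precisely why in (2) we route the argument through Lemma \ref{3.2} rather than appealing to Theorem \ref{1.6} at the level of the originally given poset $\bbQ$.
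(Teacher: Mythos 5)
Your proof is correct and follows essentially the same route as the paper: Lemma \ref{3.1} plus Lemma \ref{3.2} for part (1), and for part (2) passing to $N[G']=M[H]$, extracting $N=M(X)$ via Theorem \ref{1.2}, and re-applying Lemma \ref{3.2} with $M[H]$ in the role of $V$. Your closing remark on why one routes through the Levy collapse rather than applying Theorem \ref{1.6} directly to $\bbQ$ is a sound observation, though not needed for the argument itself.
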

\begin{proof}
(1). By Lemma \ref{3.1},
$N$ is of the form $M(X)$ for some set $X \in N$.
Then by Lemma \ref{3.2},
there is some large $\alpha$
such that, in $N$, $\col(N_\alpha)$ forces that
``there is an $(\check M, \col(\check M_\alpha))$-generic $H$
such that $\check M[H]$ is the  universe''.

(2). Take an $(N, \bbP)$-generic $G$.
In $N[G]$, there is $H \subseteq \bbQ \in M$ with $N[G]=M[H]$.
$M \subseteq N \subseteq N[G]=M[H]$,
hence $N$ is of the form $M(X)$ for some $X \in M[H]$
by Theorem \ref{1.2}, and $N$ is a symmetric submodel of some generic extension of $M[H]$ by Lemma \ref{3.2}.
\end{proof}
Using this proposition, we can obtain a formal definition of symmetric grounds.
\begin{define}\label{3.5}
Let $W$ be a model of $\ZF$. Let us say that $W$ is a \emph{symmetric ground} of $V$,
or $V$ is a \emph{symmetric extension} of $W$,
if there are posets $\bbP \in V$ and $\bbQ \in W$ such that
$\bbP$ forces that
``there is a $(\check W, \bbQ)$-generic $H$
such that $\check W[H]$ is the  universe''.
\end{define}

By Proposition \ref{3.4},
our notion of symmetric grounds and extensions coincide with the standard definition of symmetric submodels
and extensions.
Moreover our notion of symmetric extension is equivalent to
quasi-generic extension in Grigorieff \cite{G}.

\begin{note}
In Definition \ref{3.5}, 
$V$ need not be a symmetric extension of $W$ via  a poset $\bbQ$.
\end{note}
\begin{note}
If $W$ is a symmetric ground of $V$,
then $V=M(X)$ for some set $X$ by Lemma \ref{3.1}.
But the converse does not hold;
For instance, if $0^\#$ exists,
then $V=L(0^\#)=L[0^\#]$ is not a symmetric extension  of $L$.
\end{note}

By Lemma \ref{1.5+}, posets $\bbP$ and $\bbQ$ can be
Levy collapsings.
\begin{lemma}\label{3.7+}
Let $W$ be a model of $\ZF$.
Then $W$ is a symmetric ground  of $V$ if and only if
for every sufficiently large limit ordinal $\alpha$,
$\col(V_\alpha)$ forces ``there is a $(\check W, \col(\check W_\alpha))$-generic $H$
such that $\check{W}[H]$ is the  universe''.
\end{lemma}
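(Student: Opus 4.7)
The backward direction is immediate from Definition \ref{3.5}: for any $\alpha$ satisfying the hypothesis, $\bbP = \col(V_\alpha) \in V$ and $\bbQ = \col(W_\alpha) \in W$ (noting $W_\alpha = V_\alpha^W$ lies in $W$) serve as the required witnesses. The substance therefore lies in the forward direction.

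For the forward direction, fix $\bbP \in V$ and $\bbQ \in W$ witnessing Definition \ref{3.5}. I plan to show that every limit ordinal $\alpha$ with $\alpha > \mathrm{rank}(\bbP) \cdot \om$ and $\alpha > \mathrm{rank}(\bbQ) \cdot \om$ satisfies the conclusion. The first step is to move into a joint extension: take $G'$ that is $(V, \bbP)$-generic, so by Definition \ref{3.5} there is a $(W, \bbQ)$-generic $H'$ in $V[G']$ with $V[G'] = W[H']$. Then both $V$ and $W$ are grounds of $V[G']$, and the rank bounds on $\alpha$ are precisely what Theorem \ref{1.5} demands for each. Applying Lemma \ref{1.5+} inside $V[G']$ to the two grounds $V$ and $W$, for any $(V[G'], \col(V[G']_\alpha))$-generic $K$ (equivalently, any $(W[H'], \col(W[H']_\alpha))$-generic) I obtain simultaneously a $(V, \col(V_\alpha))$-generic $H_1$ and a $(W, \col(W_\alpha))$-generic $H_2$ with
\[ V[G'][K] = V[H_1] = W[H_2]. \]
This exhibits one specific $\col(V_\alpha)$-generic extension of $V$ in which the target statement holds.

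To upgrade this single witness to a forcing statement, I would invoke weak homogeneity of $\col(V_\alpha)$. The target $\varphi := $ ``there exists $H$ that is $(\check W, \col(\check W_\alpha))$-generic with $\check W[H]$ equal to the universe'' is a first-order formula of the extended language $\{\in, \check W\}$ with set parameters from $V$. Since every automorphism of $\col(V_\alpha)$ fixes canonical check-names of sets in $V$, and the class name $\check W = \{\langle \check x, p\rangle \mid p \in \col(V_\alpha),\, x \in W\}$ is consequently pointwise fixed as well, the standard weak-homogeneity argument in the extended forcing relation of Section 2 yields that $\varphi$ is either forced or refuted uniformly by $\col(V_\alpha)$ over $V$. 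The preceding paragraph exhibits a $\col(V_\alpha)$-generic extension of $V$ in which $\varphi$ holds, so $\col(V_\alpha)$ forces $\varphi$, which is the desired conclusion.

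The main obstacle I anticipate is precisely this weak-homogeneity step: for a set parameter it is textbook, but here $W$ enters as a proper class, so the forcing theorem and the automorphism action must be read in the extended sense of Section 2. That machinery is tailored exactly to this situation, and since $\check W$ is pointwise fixed by every automorphism of $\col(V_\alpha)$, no fresh difficulty should arise; verifying that the rank bounds $\alpha > \mathrm{rank}(\bbP) \cdot \om$ and $\alpha > \mathrm{rank}(\bbQ) \cdot \om$ suffice for Lemma \ref{1.5+} is then routine via Theorem \ref{1.5}.
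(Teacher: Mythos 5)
Your proof is correct and follows the route the paper intends: the lemma is stated there as an immediate consequence of Lemma \ref{1.5+} (applied in the joint extension $V[G']=W[H']$), which is exactly your argument. The explicit weak-homogeneity step you add to pass from one $\col(V_\alpha)$-generic witness to the full forcing statement is a detail the paper leaves implicit here but uses in the same way in the proof of Lemma \ref{3.2}, and your handling of the class name $\check W$ via the extended forcing relation of Section 2 is sound.
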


Now we  obtain a first order definition of symmetric grounds under $\CLS$.
\begin{thm}\label{3.7}
Suppose $\CLS$ (e.g., $\SVC$ holds).
Then there is a first order formula $\varphi(x,y)$ of set-theory such that:
\begin{enumerate}
\item For every set $r$, $\overline{W}_r=\{x \mid \varphi(x,r)\}$ is a symmetric ground of $V$ with $r \in \overline{W}_r$.
\item For every symmetric ground $W$ of $V$, there is $r$ with
$W=\overline{W}_r$.
\end{enumerate}
\end{thm}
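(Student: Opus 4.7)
The plan is to combine the uniform definability of pseudo-grounds (Theorem~\ref{2.9}) with the first-order formula $\psi$ of Lemma~\ref{2.10}, after first establishing that symmetric grounds of $V$ are precisely the $\ZF$-grounds of set-generic extensions of $V$.

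The first step is that characterization: a model $W$ of $\ZF$ is a symmetric ground of $V$ if and only if $W$ is a ground of some generic extension $V[G]$ of $V$. The forward direction follows from Lemma~\ref{3.7+}: for a sufficiently large limit $\alpha$, $\col(V_\alpha)$ forces the existence of a $(\check W,\col(\check W_\alpha))$-generic $H$ making the universe $\check W[H]$, so any $(V,\col(V_\alpha))$-generic $G$ satisfies $V[G]=W[H]$ and exhibits $W$ as a ground of $V[G]$. For the backward direction, if $W$ is a ground of $V[G]=W[H]$ (via $\bbP\in V$ and $\bbQ\in W$), then in $V$ the poset $\bbP$ forces that such an $H$ exists, which is Definition~\ref{3.5}; equivalently, Proposition~\ref{3.4}(2) applies with $N=V$ and $M=W$.

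The second step is to define the formula. Let $\theta(r)$ abbreviate the first-order statement $\exists \bbP\,\exists \dot s\,\psi(\bbP,\dot s,r)$ from Lemma~\ref{2.10}, which asserts that $W'_r$ is a ground of some generic extension of $V$. Set
\[
\varphi(x,r)\equiv (\theta(r)\wedge \varphi'(x,r))\vee \neg\theta(r),
\]
so that $\overline W_r=W'_r$ when $\theta(r)$ holds and $\overline W_r=V$ otherwise. For clause (1): under $\theta(r)$, Lemma~\ref{2.10} produces $\bbP,\dot s$ with $\Vdash_{\bbP}W_{\dot s}^{V[\dot G]}=\check W'_r$, so $W'_r$ is a ground of $V[G]$ for any $V$-generic $G$ through $\bbP$, whence step one makes $W'_r$ a symmetric ground of $V$; when $\theta(r)$ fails, $\overline W_r=V$ is a trivial symmetric ground; in either case $r\in\overline W_r$ is inherited from Theorem~\ref{2.9}. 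For clause (2): given a symmetric ground $W$, step one yields a generic extension $V[G]$ in which $W$ is a ground, and by Theorem~\ref{2.6} together with Lemma~\ref{2.7}(2) one obtains $\ka$ for which $W$ has the $\ka$-norm covering and approximation properties for $V$, so $W=W'_r$ for some $r$ by Theorem~\ref{2.9}; applying the uniform definability of grounds in $V[G]$ (Theorem~\ref{unif. def.  grounds}, justified by Note~\ref{notedayo}), one finds $s\in V[G]$ with $W=W_s^{V[G]}$, picks a $\bbP$-name $\dot s$ for $s$, passes below a condition of $G$ that forces the equality, and invokes Lemma~\ref{2.10} to conclude $\theta(r)$; thus $\overline W_r=W'_r=W$.

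The main delicate point I anticipate is the last transition in clause (2): Lemma~\ref{2.10} requires the full forcing relation $\Vdash_\bbP$, whereas our $\dot s$ only gives the correct parameter at the specific generic $G$. So one must shrink $\bbP$ to $\bbP\restriction p$ for a $p\in G$ forcing the equality (which is still a poset and lies in $V$), or alternatively arrange $\bbP$ to be a Levy collapse $\col(V_\alpha)$ as in Lemma~\ref{3.7+} and exploit its homogeneity to get the equality forced outright. This bookkeeping is where I expect the genuine technical care of the argument to lie; the rest is essentially a packaging of Theorem~\ref{2.9} and Lemma~\ref{2.10}.
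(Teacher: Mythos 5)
Your proposal is correct and follows essentially the same route as the paper: define $\overline W_r=W'_r$ when some poset forces ``$\check W'_r$ is a ground of the universe'' (made first order via Lemma \ref{2.10}) and $\overline W_r=V$ otherwise, then use the norm covering/approximation properties transferred down from $V[G]$ to $V$ to show every symmetric ground appears as some $W'_r$. The ``delicate point'' you flag is real but is exactly what Lemma \ref{2.10} (together with Lemma \ref{3.7+}, which lets one take $\bbP=\col(V_\alpha)$ so the relevant statement is forced outright) is designed to absorb, and your proposed fix is consistent with how the paper handles it.
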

\begin{proof}
Let $\{W_r'\mid r \in V \}$ be the collection of all pseudo-grounds defined as in Theorem \ref{2.9}.
Define $\overline{W}_r$ as follows:
If there is a poset $\bbP$ such that
$\Vdash_\bbP$``$\check W_{r}'$ is a ground of the universe'',
then $\overline{W}_r=W_r'$. Otherwise $\overline{W}_r=V$.
By Lemma \ref{2.10},
 $\{\overline{W}_r \mid r \in V\}$ is 
a first order definable collection.
We check that $\{\overline{W}_r \mid r \in V\}$ is the collection of all symmetric grounds of $V$.

For $r \in V$, if $\overline{W}_r \neq V$ then $\overline{W}_r=W_r'$ and there is a poset $\bbP$
such that $\Vdash_\bbP$``$\check W_r'$ is a ground of the universe''.
Hence $W_r'$ is a ground of a generic extension of $V$ via $\bbP$,
so $\overline{W}_r=W_r'$ is a symmetric ground of $V$.
For the converse,
suppose $W$ is a symmetric ground of $V$.
We can choose a generic extension $V[G]$ of $V$ via poset $\bbP$
such that $W$ is a ground of $V[G]$.
We can take a large \LS cardinal $\ka$
such that $\ka$ is \LS in V[G],
and $W$ have the $\ka$-norm covering and the $\ka$-norm approximation properties for $V[G]$.
Since $W \subseteq V \subseteq V[G]$,
$W$ has the $\ka$-norm covering and the $\ka$-norm approximation properties for $V$.
Hence $W=W_r'$ for some $r \in V$, and in $V$ we can choose a poset $\bbQ$
with $\Vdash_\bbQ$``$\check W_r'$ is a ground of the universe'',
so $W=\overline{W}_r$.
\end{proof}

Hence in $\ZFC$, we can always define all symmetric grounds uniformly.

\section{Some properties of symmetric extensions and grounds}

In this section, we make some observations and prove some useful properties of
symmetric extensions and grounds.


The following is partially proved by Grigorieff (\cite{G}).

\begin{lemma}\label{4.3}
Let $M$ and $N$ be models of $\ZF$ with $M \subseteq N \subseteq V$.
Then any two of the following conditions imply the third:
\begin{enumerate}
\item $N$ is a symmetric extension of $M$.
\item $V$ is a symmetric extension of $N$.
\item $V$ is a symmetric extension of $M$.
\end{enumerate}
\end{lemma}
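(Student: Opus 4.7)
The plan is to reduce each of the three implications to a routine manipulation of the $\col$-characterization of symmetric extensions given by Lemma \ref{3.7+}, combined with Theorem \ref{1.2}, Lemma \ref{3.2}, and the Note following Definition \ref{3.5} (which states $V = W(X)$ for some set $X$ whenever $W$ is a symmetric ground of $V$). Throughout, I fix a single limit ordinal $\alpha$ sufficiently large for all simultaneous applications of Lemma \ref{3.7+} to the hypotheses in force.

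For $(1)+(2)\Rightarrow(3)$, let $G$ be $(V,\col(V_\alpha))$-generic. By (2) there is an $(N,\col(N_\alpha))$-generic $K$ with $V[G] = N[K]$. By (1), $\col(N_\alpha)$ forces over $N$ the existence of an $(M,\col(M_\alpha))$-generic $H$ with $M[H]$ the universe; applied to $K$ inside $N[K]$, this yields $H$ with $M[H] = N[K] = V[G]$, so Lemma \ref{3.7+} gives (3). For $(2)+(3)\Rightarrow(1)$, the same setup produces $V[G] = M[H] = N[K]$ with $H$ and $K$ being $\col(M_\alpha)$- and $\col(N_\alpha)$-generic over $M$ and $N$ respectively; then $M[H]$ is a generic extension of both $M$ and $N$, so Theorem \ref{1.2} gives $N = M(X)$ for some $X \in N$. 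Applying Lemma \ref{3.2} to the triple $M \subseteq N \subseteq M[H]$ produces a large $\beta$ such that $\col(N_\beta)$ over $N$ forces the existence of an $(M,\col(M_\beta))$-generic $H'$ with $N[G'] = M[H']$, whence Lemma \ref{3.7+} delivers (1).

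The main obstacle is $(1)+(3)\Rightarrow(2)$, since neither $N$ nor $V$ is directly a generic extension of anything useful. The key algebraic step is: by the Note after Definition \ref{3.5}, $N = M(X)$ and $V = M(Y)$ for some sets $X \in N$ and $Y \in V$, whence $V = N(Y)$, because $M \subseteq N$ gives $M(Y) \subseteq N(Y)$, while $N \subseteq V$ together with $Y \in V$ and $V \models \ZF$ gives $N(Y) \subseteq V$. Using (3), I pick any generic extension $V[G_0]$ of $V$ with $V[G_0] = M[H_0]$; since $M \subseteq N \subseteq V[G_0]$ and $N = M(X)$, Theorem \ref{1.2} makes $V[G_0]$ a generic extension of $N$, so $N$ is a ground of $V[G_0]$. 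Finally, applying Lemma \ref{3.2} inside $V[G_0]$ with $N$ playing the role of the ground, $V$ as the intermediate model (this is where $V = N(Y)$ is used), and $V[G_0]$ as the top universe yields a large $\beta$ such that $\col(V_\beta)$ over $V$ forces the existence of an $(N,\col(N_\beta))$-generic $K$ with $V[G'] = N[K]$. Lemma \ref{3.7+} then delivers (2).
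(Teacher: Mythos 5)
Your proof is correct and follows essentially the same route as the paper: a common Levy-collapse extension via Lemma \ref{3.7+} for the cases involving (2), and the decomposition $N=M(X)$, $V=M(Y)=N(Y)$ together with Theorem \ref{1.2} and Lemma \ref{3.2} for $(1)\wedge(3)\Rightarrow(2)$. The only (harmless) deviation is in $(2)\wedge(3)\Rightarrow(1)$, where the paper simply observes that $M$ is a ground of the common extension $N[H]$ and concludes directly from the definition, while you take a small detour through Theorem \ref{1.2} and Lemma \ref{3.2} to produce the forcing statement over $N$ explicitly.
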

\begin{proof}
(1)$\land$(2) $\Rightarrow$ (3).
By Lemma \ref{3.7+},
there is some large limit ordinal $\alpha$
such that:
\begin{itemize}
\item In $N$,
$\col(N_\alpha)$ forces that ``there is an $(\check M , \col(\check{M}_\alpha))$-generic $H$
such that $\check M[H]$ is the  universe''.
\item In $V$,
$\col(V_\alpha)$ forces that ``there is an $(\check N, \col(\check{N}_\alpha))$-generic $H$
such that $\check N[H]$ is the  universe''.
\end{itemize}
Take a $(V, \col(V_\alpha))$-generic $G$.
Then there is an $(N,\col(N_\alpha))$-generic $H$
with $V[G]=N[H]$,
and there is an $(M, \col(M_\alpha))$-generic $H'$
with $M[H']=N[H]$.
Hence $M$ is a ground of the generic extension $V[G]$ of $V$,
and $M$ is a symmetric ground of $V$.

(1)$\land$(3) $\Rightarrow$ (2). 
Since $V$ is a symmetric extension of $M$,
there is a generic extension $M[G]$ of $M$ such that
$V \subseteq M[G]$ and $V=M(X)$ for some $X \in V$ by Lemma \ref{3.1}.
By the same argument, $N=M(Y)$ for some $Y \in N$.
We know $M \subseteq N=M(Y) \subseteq V=M(X) \subseteq M[G]$.
Then $M[G]$ is a generic extension of $N=M(Y)$ by
Theorem \ref{1.2}.
We know $V=N(X)$, hence $V$ is a symmetric extension of $N$ by
Lemma \ref{3.2}.

(2)$\land$(3) $\Rightarrow$ (1). By Lemma \ref{3.7+}, 
there is some large $\alpha$ such that 
whenever $G$ is $(V, \col(V_\alpha))$-generic, we have that
$V[G]=N[H]=M[H']$ for some $H \subseteq \col(N_\alpha) \in N$ and $H' \subseteq \col(M_\alpha) \in M$.
Hence $M$ is a ground of $N[H]$, so $M$ is a symmetric ground of $N$.
\end{proof}
By this lemma,
the ``is a symmetric extension of'' relation is transitive.

It is  known that if $M \subseteq N \subseteq  M[G]$ are models of $\ZFC$ and
$M[G]$ is a generic extension of $M$,
then $M$ is a ground of $N$, and $N$ is of $M[G]$ (compare Theorem \ref{1.2}).
We prove a variant of this fact in our context.
\begin{lemma}\label{5.3}
Suppose $M$ is a symmetric ground of $V$.
Then for every model $N$ of $\ZF$ with $M \subseteq N \subseteq V$,
the following are equivalent:
\begin{enumerate}
\item $N$ is a symmetric ground of $V$.
\item $N$ is of the form $M(X)$ for some set $X \in V$.
\item $N$ is a symmetric extension of $M$.
\end{enumerate}
\end{lemma}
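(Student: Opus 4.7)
The plan is to close the cycle (3)$\Rightarrow$(1)$\Rightarrow$(2)$\Rightarrow$(3), appealing in each case to a result already proved in the excerpt.

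For (3)$\Rightarrow$(1), I would feed the hypotheses directly into Lemma \ref{4.3}. Clause (3) of the present lemma is exactly condition (1) of Lemma \ref{4.3} (``$N$ is a symmetric extension of $M$''), while the standing assumption that $M$ is a symmetric ground of $V$ is condition (3) there. The implication (1)$\land$(3)$\Rightarrow$(2) of Lemma \ref{4.3} then gives that $V$ is a symmetric extension of $N$, i.e., $N$ is a symmetric ground of $V$.

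For (2)$\Rightarrow$(3), I would use Proposition \ref{3.4} to fix a generic extension $M[G]$ of $M$ in which $V$ sits as a symmetric submodel. Then $M \subseteq N \subseteq V \subseteq M[G]$ with $M$ an honest ground of $M[G]$, and $N = M(X)$ by hypothesis. Lemma \ref{3.2}, applied with $M[G]$ playing the role of the ambient universe, produces a limit ordinal $\alpha$ such that every $(N, \col(N_\alpha))$-generic $G'$ gives rise to an $(M, \col(M_\alpha))$-generic $H$ with $N[G'] = M[H]$ and $N$ a symmetric submodel of $M[H]$. Definition \ref{3.5}, taking $\bbP = \col(N_\alpha) \in N$ and $\bbQ = \col(M_\alpha) \in M$, then certifies that $N$ is a symmetric extension of $M$.

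The heart of the argument is (1)$\Rightarrow$(2), where the goal is to exhibit $N$ as $M(X)$ for some $X$. My plan is to collapse to a common generic extension of $V$ in which both $M$ and $N$ become honest grounds, and then invoke Theorem \ref{1.2}. By Lemma \ref{3.7+}, the fact that $M$ is a symmetric ground of $V$ is witnessed by $\col(V_\alpha)$ for every sufficiently large limit ordinal $\alpha$, and likewise for $N$. I would fix a single $\alpha$ that works for both, take a $(V, \col(V_\alpha))$-generic $K$, and invoke Lemma \ref{3.7+} twice to obtain an $(M, \col(M_\alpha))$-generic $H_0$ and an $(N, \col(N_\alpha))$-generic $H_1$ with $V[K] = M[H_0] = N[H_1]$. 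Inside $V[K]$, the chain $M \subseteq N \subseteq V[K]$ has $M$ as a ground (via $H_0$) and $V[K]$ as a generic extension of $N$ (via $H_1$), so Theorem \ref{1.2} delivers an $X \in N$ with $N = M(X)$; since $N \subseteq V$, this $X$ lies in $V$.

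The main obstacle I anticipate is the last step: ensuring that a single $\alpha$ can simultaneously serve for Lemma \ref{3.7+} applied to both symmetric grounds $M$ and $N$, which is exactly the uniformity encoded by the phrase ``for every sufficiently large limit ordinal $\alpha$'' in Lemma \ref{3.7+}. A minor bookkeeping point in (2)$\Rightarrow$(3) is verifying that $\col(M_\alpha)$ actually lies in $M$, which reduces to $M_\alpha = V_\alpha^M \in M$, a consequence of $M$ being a transitive model of $\ZF$.
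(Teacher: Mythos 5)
Your proposal is correct and relies on the same toolkit the paper uses (Lemma \ref{4.3}, Theorem \ref{1.2}, and Lemmas \ref{3.2}/\ref{3.7+}), differing only organizationally: the paper proves the pairwise equivalences $(1)\Leftrightarrow(3)$ and $(2)\Leftrightarrow(1)$ directly, whereas you close a cycle and therefore prove $(2)\Rightarrow(3)$ explicitly via Lemma \ref{3.2} instead of getting it by composition. Your unwinding of $(1)\Rightarrow(2)$ through a common collapse extension is in fact a more explicit rendering of what the paper compresses into ``is Lemma \ref{3.1}.''
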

\begin{proof}
(1) $\iff$ (3) is immediate from Lemma \ref{4.3}.

(2) $\Rightarrow$ (1). Since $M$ is a symmetric ground,
there is a generic extension $V[G]$ of $V$ such that
$M$ is a ground of $V[G]$.
Then $N=M(X)$ is a ground of $V[G]$ by Theorem \ref{1.2},
so $N$ is a symmetric ground of $V$.

(1) $\Rightarrow$ (2) is Lemma \ref{3.1}.
\end{proof}
It is also known that the Bristol model (\cite{K})
is an intermediate model $N$ between the constructible universe $L$ 
and the Cohen forcing extension of $L$ but $N$ does not satisfy (1)--(3) of Lemma \ref{5.3}.

We can characterize $\SVC$ in terms of symmetric grounds.
Note that under $\AC$, all grounds are uniformly definable.
\begin{thm}[\cite{U1}]\label{DDG}
Suppose $V$ satisfies $\AC$,
and let $\{W_r \mid r \in V\}$ be the collection of all $\ZFC$-grounds of $V$.
Then for every set $X$, there is
a set $s$ such  that $W_s \subseteq \bigcap_{r \in X} W_r$.
\end{thm}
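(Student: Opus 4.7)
The plan is to reduce the theorem to the uniform definability of $\ZFC$-grounds noted just before the statement and to the approximation/covering characterization of grounds (Theorem \ref{2.6}), following the standard Downward Directed Grounds strategy. Under $\AC$, the formula $\varphi$ defining $\{W_r : r \in V\}$ makes the collection of grounds a first-order object, so quantifying over all $W_r$ with $r \in X$ is legitimate and permits genuine reflection arguments inside $V$.

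First I would establish the pairwise case: given two $\ZFC$-grounds $W_{r_1}, W_{r_2}$ of $V$, produce a $\ZFC$-ground $W_s \subseteq W_{r_1} \cap W_{r_2}$. Writing $V = W_{r_i}[G_i]$ with $\bbP_i \in W_{r_i}$ and $|\bbP_i|<\delta$, Theorem \ref{2.6} gives each $W_{r_i}$ the $\delta^+$-norm approximation and $\delta^+$-norm covering properties for $V$. The candidate common ground is the intersection class $W^* = W_{r_1} \cap W_{r_2}$, which inherits both properties from its components; the converse direction (essentially as in Fuchs-Hamkins-Reitz \cite{FHR}) then identifies $W^*$ as a $\ZFC$-ground, and uniform definability supplies the parameter $s$ with $W^* = W_s$.

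For the set-indexed case, I would exploit $\AC$-replacement applied to the definable map $r \mapsto \delta_r$, where $\delta_r$ is least such that $W_r$ is a ground of $V$ via a forcing in $(W_r)_{\delta_r}$. Its range is a set of ordinals, bounded by some $\delta^*$, and since $\AC$ implies $\CLS$, I may pick an \LS cardinal $\kappa > \delta^*$. Theorem \ref{2.6} then endows every $W_r$ ($r\in X$) with the $\kappa$-norm approximation and $\kappa$-norm covering properties for $V$, \emph{uniformly in $r$}. The candidate common ground is the class of $x \in V$ such that $x \cap a \in W_r$ for every $r \in X$ and every $a$ with $\norm{a}<\kappa$. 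This class is first-order definable with parameter $X$ via the uniform formula $\varphi$; once verified to be a transitive $\ZFC$-model with $\kappa$-norm approximation and $\kappa$-norm covering for $V$, it is a $\ZFC$-ground by the characterization of grounds, and its defining parameter serves as the required $s$.

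The hard part will be establishing the covering property for the set-indexed candidate. For a single pair of grounds, covering is inherited by a short argument matching $\delta^+$-sized approximations between $W_{r_1}$ and $W_{r_2}$; for a set-indexed family, however, one must produce, for each $Y \subseteq V$ of norm $<\kappa$, a \emph{single} cover $Z$ that lies in every $W_r$ simultaneously and satisfies $\norm{Z}^{W_r}<\kappa$ uniformly. This is the delicate combinatorial heart of Usuba's DDG theorem \cite{U1}, typically handled by a reflection argument inside a suitably large $V_\lambda$ with $\lambda \gg \kappa$, exploiting $\AC$ to assemble the single covering witness from the covers provided by each individual $W_r$ and the uniform bound $\delta^*$ on the forcings involved.
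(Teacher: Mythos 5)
This statement is quoted from \cite{U1} and the paper gives no proof of it at all, so there is no internal argument to compare yours against; I can only assess the proposal on its own terms, and as a proof of Usuba's strong DDG theorem it has genuine gaps. The central one is the choice of candidate: you take $W^{*}=W_{r_1}\cap W_{r_2}$ (and, in the set-indexed case, an intersection-like class) and assert it ``inherits both properties from its components.'' Neither inheritance is available. For covering, $W_{r_1}$ and $W_{r_2}$ each supply their own cover of a small $X$, but nothing produces a single cover lying in both models of small norm in both --- this amalgamation problem is precisely why DDG was a hard open problem rather than an exercise. For approximation, knowing $Y\cap a\in W^{*}$ for all small $a\in W^{*}$ is strictly weaker than the hypothesis needed to invoke the approximation property of $W_{r_1}$ (which requires $Y\cap a\in W_{r_1}$ for all small $a\in W_{r_1}$, a larger collection of test sets). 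Relatedly, the intersection of two grounds need not even be a model of $\ZFC$, so it cannot in general be the common ground; Usuba's actual construction produces a ground properly contained in the intersection by an entirely different route.

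The second gap is the step ``once verified to have the $\kappa$-norm approximation and covering properties, it is a $\ZFC$-ground by the characterization of grounds.'' No such characterization exists: Theorem \ref{2.6} is a one-way implication, and the paper's whole notion of \emph{pseudo-ground} exists because models with both properties need not be grounds. What the approximation/cover machinery yields (Theorem \ref{2.5}, or Laver--Hamkins in the $\ZFC$ setting) is uniqueness and hence definability, not groundhood; groundhood has to be established separately. Finally, your last paragraph defers ``the delicate combinatorial heart of Usuba's DDG theorem'' to \cite{U1} --- but that heart \emph{is} the theorem you are proving, so as written the argument is circular. A correct treatment would either reproduce Usuba's amalgamation argument in full or, as this paper does, simply cite \cite{U1}.
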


\begin{prop}\label{4.5}
The following are equivalent:
\begin{enumerate}
\item $\SVC$ holds.
\item There is a symmetric ground satisfying $\AC$.
\item There is a symmetric ground $W$ satisfying $\AC$ and $V=W(X)$ for some set $X$.
\end{enumerate}
\end{prop}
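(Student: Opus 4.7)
The implication $(3) \Rightarrow (2)$ holds by simply dropping the extra clause $V = W(X)$. For $(2) \Rightarrow (1)$, suppose $W$ is a symmetric ground of $V$ with $W \models \AC$. Unwinding Definition \ref{3.5}, there exist $\bbP \in V$ and $\bbQ \in W$ such that $\bbP$ forces the existence of a $(\check W, \bbQ)$-generic $H$ with $\check W[H]$ the universe. For a $V$-generic $G \subseteq \bbP$ we get $V[G] = W[H]$ for some $W$-generic $H \subseteq \bbQ$. Since $W \models \AC$ and $\bbQ \in W$, the generic extension $V[G] = W[H]$ satisfies $\AC$, so $\bbP$ forces $\AC$ over $V$. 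By the equivalence $\SVC \iff \AC$-is-forceable (Blass's theorem, stated just above), $\SVC$ holds in $V$.

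For $(1) \Rightarrow (3)$, assume $\SVC$. Blass's theorem yields a $\bbP \in V$ forcing $\AC$, and by Lemma \ref{1.5+} we may take $\bbP = \col(V_\alpha)$ for a sufficiently large limit ordinal $\alpha$. Fix a $V$-generic $G$; then $V[G] \models \AC$. The plan is to produce a $\ZFC$-ground $W$ of $V[G]$ contained in $V$. Given such $W$: the pair $\bbP \in V$ and $\bbQ \in W$ (where $\bbQ \in W$ is the forcing for which $V[G] = W[H]$) witnesses via Definition \ref{3.5} that $W$ is a symmetric ground of $V$; and Theorem \ref{1.2} applied to the chain $W \subseteq V \subseteq V[G]$ delivers $V = W(X)$ for some set $X$, since $V[G]$ is a generic extension of $V$. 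Thus (3) follows.

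To construct $W$, apply Theorem \ref{1.3} inside $V[G]$ to the ZF-ground $V$ (noting $V[G] = V(G)$), obtaining $U := \HOD_V^{V[G]} = V(Y)$ for some $Y \subseteq V$, together with a generic extension $U[K]$ of $U$ of which $V[G]$ is a ZFC-ground. The universe $U[K]$ satisfies $\AC$. Inside $U[K]$ we invoke the downward directedness of ZFC-grounds (Theorem \ref{DDG}) together with the transitivity of the ground relation to descend to a smaller ZFC-ground of $V[G]$; the weak homogeneity and ordinal-definability (with the set parameter $V_\alpha^V$) of $\col(V_\alpha)$, combined with a HOD-style argument, ensure that this ground can be placed inside $V$. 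The main obstacle is this last step---verifying that a suitable HOD-like inner model of $V[G]$, cut down by the homogeneity of the Levy collapse to lie inside $V$, is not merely an inner ZFC model but actually a ground of $V[G]$. This is where Theorem \ref{1.3} and the ZFC geology of the auxiliary universe $U[K]$ enter essentially.
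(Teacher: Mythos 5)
Your implications $(3)\Rightarrow(2)$ and $(2)\Rightarrow(1)$ are correct and essentially match the paper: for $(2)\Rightarrow(1)$ the paper likewise observes that the auxiliary universe $W[H]=V[G]$ is a model of $\AC$ extending $V$ generically, so $\SVC$ holds in $V$.

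The direction $(1)\Rightarrow(3)$, however, contains a genuine gap, which you yourself flag: you never actually produce a $\ZFC$-ground $W$ of $V[G]$ with $W\subseteq V$. The route you sketch cannot work as stated. Theorem \ref{1.3} applied to $V\subseteq V[G]=V(G)$ gives $U=\HOD_V^{V[G]}$ with $V\subseteq U$, i.e.\ it moves \emph{upward} from $V$, and no amount of homogeneity of $\col(V_\alpha)$ will push a $\HOD$-style model of $V[G]$ computed relative to $V$ \emph{below} $V$; moreover even if some inner $\ZFC$ model of $V[G]$ inside $V$ were produced this way, there is no mechanism in your argument certifying it is a \emph{ground} of $V[G]$. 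The missing idea is the product-forcing (mutual genericity) trick: force with $\bbP\times\bbP$ to get mutually generic $G\times H$; then $V[G]$ and $V[H]$ are both $\ZFC$-grounds of $V[G][H]$, so by Theorem \ref{DDG} (applied in the $\ZFC$ universe $V[G][H]$) there is a common $\ZFC$-ground $W$ of $V[G][H]$ with $W\subseteq V[G]\cap V[H]$, and the key intersection fact $V[G]\cap V[H]=V$ forces $W\subseteq V$. From $W\subseteq V\subseteq V[G][H]$ with $V[G][H]$ a generic extension of $W$, Theorem \ref{1.2} yields $V=W(X)$ for some $X$, and Lemma \ref{3.2} (or Lemma \ref{5.3} inside $V[G][H]$) then shows $V$ is a symmetric extension of $W$, giving (3) and hence (2). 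Without the product trick, the descent from $V[G]$ into $V$ does not go through.
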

\begin{proof}
(1) $\Rightarrow$ (2). Take a poset $\bbP$ which forces $\AC$.
Take a $(V,\bbP \times \bbP)$-generic $G \times H$.
Both $V[G]$ and $V[H]$ are grounds of $V[G][H]$ and satisfy $\AC$.
By Theorem \ref{DDG}, there is a ground $W$ of $V[G][H]$ such that
$W$ satisfies $\AC$ and $W \subseteq V[G] \cap V[H]$.
Since $V[G] \cap V[H]=V$ (e.g., see \cite{FHR}), we have $W \subseteq V \subseteq V[G][H]$.
Then $V$ is of the form $W(X)$ for some set $X$ by Theorem \ref{1.2}.
Now $V$ is a symmetric extension of $W$ by Lemma \ref{5.3}.

(2) $\Rightarrow$ (1).
There is a generic extension $W[G]$ of $W$ such that
$V$ is a ground of $W[G]$.
$W[G]$ satisfies $\AC$, so $W[G]$ witnesses that $\SVC$ holds in $V$.

(2) $\Rightarrow$ (3).
If $W$ is a symmetric ground satisfying $\AC$,
then $W$ is a ground of some generic extension $V[G]$ of $V$.
Applying Theorem \ref{1.2},
we have that $V=W(X)$ for some set $X \in V$.

(3) $\Rightarrow$ (2) is trivial.
\end{proof}

Next we prove the absoluteness of
$\CLS$ and $\SVC$ between all symmetric grounds and symmetric extensions.

\begin{prop}\label{4.7}
Let $W$ be a symmetric ground of $V$.
Then $\CLS$ and $\SVC$ are absolute between $W$ and $V$.
\end{prop}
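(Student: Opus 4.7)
The plan is to exploit the fact, built into the definition of symmetric ground, that $W$ and $V$ share a common generic extension, and then reduce both absoluteness claims to the already-stated absoluteness of $\CLS$ and $\SVC$ between a model and its forcing extensions (and grounds).

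First I would unpack Definition \ref{3.5}: since $W$ is a symmetric ground of $V$, there are posets $\bbP \in V$ and $\bbQ \in W$ such that, in $V$, $\bbP$ forces the existence of a $(\check W, \bbQ)$-generic filter $H$ with $\check W[H]$ equal to the universe. Passing to an actual $(V,\bbP)$-generic $G$ (working in some background universe, or using Lemma \ref{3.7+} directly with Levy collapses), this yields a set $H$ such that $V[G] = W[H]$. Thus $V[G]$ is simultaneously a generic extension of $V$ and a generic extension of $W$, i.e., both $V$ and $W$ are grounds of the single model $V[G]$.

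Now the absoluteness claims follow immediately. For $\CLS$: by Theorem \ref{2.2}, the statement ``there is a proper class of \LS cardinals'' is preserved under set forcing in both directions (a poset in $V_\ka$ preserves \LS -ness of $\ka$, and conversely a cardinal that is \LS in a generic extension via a small poset is already \LS). Hence $\CLS^V \iff \CLS^{V[G]} \iff \CLS^{W[H]} \iff \CLS^W$. For $\SVC$: the paper has already noted (immediately after the $\SVC$--forceability equivalence) that $\SVC$ is absolute between a model, all of its grounds, and all of its generic extensions. Applying this to the common generic extension $V[G] = W[H]$ yields $\SVC^V \iff \SVC^{V[G]} \iff \SVC^W$.

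I do not expect any genuine obstacle here; the whole point is that ``symmetric ground'' is defined in exactly the way that forces $W$ and $V$ into a common forcing extension, so the already-catalogued forcing-absoluteness of $\CLS$ and $\SVC$ does all the work. The only small care needed is to apply the absoluteness statements in both directions at each step, which I would make explicit in the write-up by chaining biconditionals through $V[G]=W[H]$.
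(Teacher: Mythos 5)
Your proposal is correct and is essentially the paper's own argument: the paper likewise observes that $W$ being a symmetric ground yields a common generic extension $W[G]$ of which both $W$ and $V$ are grounds, and then chains the already-established absoluteness of $\CLS$ and $\SVC$ between a model, its grounds, and its generic extensions through that common model. The only difference is that you spell out the appeal to Definition \ref{3.5}/Lemma \ref{3.7+} and to Theorem \ref{2.2} explicitly, which the paper leaves implicit.
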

\begin{proof}
Since $V$ is a symmetric extension of $W$,
we have that $V$ is a ground of some  generic extension $W[G]$ of $W$.
Since $\CLS$ and $\SVC$ are absolute between all grounds and generic extensions,
it is also absolute between $W$, $V$, and $W[G]$.
\end{proof}

By the absoluteness of $\CLS$ and Theorem \ref{3.7},
we have: 
\begin{cor}
Suppose $\CLS$ holds.
Then $V$ is definable in its symmetric extensions with parameters from $V$.\footnote{A. Karagila independently obtained this result assuming that $V$ satisfies $\AC$.}
\end{cor}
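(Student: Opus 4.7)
The plan is to apply Theorem \ref{3.7} \emph{inside} an arbitrary symmetric extension $V'$ of $V$. By definition, $V$ is a symmetric ground of $V'$, and Proposition \ref{4.7} propagates the hypothesis $\CLS$ from $V$ up to $V'$. Applying Theorem \ref{3.7} in $V'$ therefore supplies a single first-order formula $\varphi(x,y)$ together with some parameter $r \in V'$ for which
\[
V=\{x\in V'\mid V'\models\varphi(x,r)\}.
\]
The whole task reduces to arguing that $r$ may be chosen to lie in $V$.

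To do this I would trace the explicit construction in the proof sketch of Theorem \ref{2.9} (used verbatim in Theorem \ref{3.7}): the parameter witnessing that a pseudo-ground $W$ of the ambient universe equals $\overline{W}_r'$ is a triple $\langle X,\kappa,\alpha\rangle$ where $\kappa$ is an LS cardinal of the ambient universe, $\alpha<\kappa$, and $X=W\cap V_\kappa$. Specialising to $W=V$ and ambient model $V'$, the set $X=V\cap V'_\kappa$ coincides with the $\kappa$-th level of the cumulative hierarchy computed inside $V$ (since $V$ is a transitive class of $V'$ containing all ordinals), so $X\in V$. The ordinals $\kappa$ and $\alpha$ trivially lie in $V$ as well, so $r=\langle X,\kappa,\alpha\rangle\in V$.

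The main obstacle is producing some admissible triple: concretely, an LS cardinal $\kappa$ of $V'$ together with an $\alpha<\kappa$ for which $V$ has both the $\alpha$-norm covering and $\alpha$-norm approximation properties relative to $V'$. I would arrange this as follows. Invoke Lemma \ref{3.7+} to obtain, in a common generic extension $V'[G']=V[H']$, a L\'evy collapse $\col(V_\beta)\in V$ realising $V$ as a genuine ground of $V'[G']$. By Theorem \ref{2.6}, $V$ has the $\alpha$-norm covering and approximation properties for $V'[G']$ at any LS cardinal $\alpha>\beta$ of $V'[G']$; Lemma \ref{2.7}(2) transfers these to the intermediate model $V'$. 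Using $\CLS$ in $V'$ (and the closure of LS cardinals under limits, so that Theorem \ref{2.2} propagates LS-ness across the generic extension $V'\subseteq V'[G']$), choose an LS cardinal $\alpha$ of $V'$ above $\beta$ which is also LS in $V'[G']$, and then any LS cardinal $\kappa$ of $V'$ with $\kappa>\alpha$; the triple $r=\langle V\cap V'_\kappa,\kappa,\alpha\rangle\in V$ is the desired parameter. The delicate point throughout is coordinating LS cardinals across $V$, $V'$, and $V'[G']$: Theorem \ref{2.2} together with Proposition \ref{4.7} does the absoluteness work, but each invocation must be taken in the correct direction.
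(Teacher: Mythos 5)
Your proposal is correct and follows essentially the same route as the paper, which derives the corollary directly from the absoluteness of $\CLS$ (Proposition \ref{4.7}) and the uniform definability of symmetric grounds (Theorem \ref{3.7}) applied inside the symmetric extension. Your additional verification that the witnessing parameter $r=\langle V\cap V'_\kappa,\kappa,\alpha\rangle$ can be taken in $V$ is exactly the detail the paper leaves implicit, and your construction of an admissible triple via Lemma \ref{3.7+}, Theorem \ref{2.6}, Lemma \ref{2.7} and Theorem \ref{2.2} is sound.
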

In particular, under $\AC$, $V$ is always definable in its symmetric extensions.

\begin{note}
As the uniform definability of grounds, 
under $\CLS$, if the formula $\varphi$ defines all symmetric grounds as in Theorem \ref{3.7},
then in all symmetric grounds and symmetric extensions of $V$,
$\varphi$ also defines all its symmetric grounds.
This follows from the absoluteness of $\CLS$ and the proof of Theorem \ref{3.7}.
\end{note}

Finally, we observe the structure of the symmetric models under $\AC$.

\begin{lemma}\label{5.8}
Suppose $V$ satisfies $\AC$. Then every symmetric ground satisfying $\AC$ is a ground.
\end{lemma}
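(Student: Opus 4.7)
The plan is to unfold Definition \ref{3.5} in order to place $V$ as a $\ZFC$ intermediate model inside a set-generic extension of $W$, and then to appeal to the classical intermediate-model theorem of $\ZFC$ forcing. First I would fix posets $\bbP \in V$ and $\bbQ \in W$ witnessing that $W$ is a symmetric ground of $V$; taking any $(V,\bbP)$-generic $G$, one then obtains inside $V[G]$ a $(W,\bbQ)$-generic filter $H$ with $W[H]=V[G]$, and hence the chain
\[ W \;\subseteq\; V \;\subseteq\; V[G] \;=\; W[H]. \]

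The next observation is that every model in this chain satisfies $\AC$: $W$ and $V$ by hypothesis, and $W[H]$ because it is a set-generic extension of the $\ZFC$-model $W$. Thus $V$ is a transitive class $\ZFC$-model containing all ordinals and sitting between the $\ZFC$-ground $W$ and the set-generic extension $W[H]$ of $W$.

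At this point the classical intermediate-model theorem of Solovay--Grigorieff applies inside $W[H]$: it produces a complete subalgebra $\bbB \in W$ of $\mathrm{r.o.}(\bbQ)^W$ such that $H \cap \bbB$ is $(W,\bbB)$-generic and $V = W[H \cap \bbB]$, which exhibits $W$ as a ground of $V$. The hypothesis $V \models \AC$ is essential here, since without it an intermediate model of a generic extension need not itself be a generic extension; the Bristol model, cited in Note \ref{2.7-} and after Lemma \ref{5.3}, is precisely such a counterexample. The only conceptual obstacle is in checking that the standard $\ZFC$ intermediate-model theorem applies to our class-sized setting, but this is immediate since $W$ and $V$ are transitive class models of $\ZFC$ containing all ordinals.
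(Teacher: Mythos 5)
Your proposal is correct and follows essentially the same route as the paper: unfold the definition of symmetric ground to place $V$ between the $\ZFC$-model $W$ and a set-generic extension $W[H]=V[G]$, then invoke the classical $\ZFC$ intermediate-model theorem (which the paper states just before Lemma \ref{5.3}). The only difference is that you spell out the Solovay--Grigorieff complete-subalgebra machinery that the paper leaves implicit.
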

\begin{proof}
Let $W$ be a symmetric ground of $V$.
Then $W$ is a ground of some generic extension $V[G]$ of $V$.
$V$, $V[G]$, and $W$ are models of $\ZFC$ with $W \subseteq V \subseteq V[G]$.
Then $W$ is a ground of $V$.
\end{proof}

\begin{cor}
Suppose $V$ satisfies $\AC$.
Then for every model $M$ of $\ZF$,
the following are equivalent:
\begin{enumerate}
\item $M$ is a symmetric ground of $V$.
\item $M$ is a ground of $V$.
\item There is a $\ZFC$-ground $W$ of $V$
and a set $X$ such that $M=W(X)$.
\end{enumerate}
\end{cor}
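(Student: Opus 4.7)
My plan is to prove the cycle $(2)\Rightarrow(1)\Rightarrow(3)\Rightarrow(2)$. The implication $(2)\Rightarrow(1)$ is immediate since every generic extension is tautologically a symmetric extension. For $(3)\Rightarrow(2)$, if $W$ is a $\ZFC$-ground of $V$ and $M = W(X)$ with $X \in M$, then Theorem \ref{1.2} applied to the chain $W \subseteq M \subseteq V$ gives at once that $V$ is a generic extension of $M$, so $M$ is a ground of $V$.

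The substantive direction is $(1)\Rightarrow(3)$, and the idea is to work \emph{inside} $M$. Since $V \models \AC$, $\SVC$ holds in $V$; and because $M$ is a symmetric ground of $V$, Proposition \ref{4.7} (absoluteness of $\SVC$ between symmetric grounds) transfers $\SVC$ to $M$. Now I would invoke Proposition \ref{4.5} with $M$ playing the role of the ambient universe: it produces a symmetric ground $W$ of $M$ satisfying $\AC$, together with a set $X$ such that $M = W(X)$. It remains to promote $W$ to a $\ZFC$-ground of $V$. Lemma \ref{4.3} chains the two symmetric-extension relations $W \subseteq M \subseteq V$: $M$ is a symmetric extension of $W$ and $V$ is a symmetric extension of $M$, so $V$ is a symmetric extension of $W$. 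Finally, since both $V$ and $W$ satisfy $\AC$, Lemma \ref{5.8} upgrades $W$ from a symmetric ground of $V$ to a genuine ground of $V$, i.e., a $\ZFC$-ground of $V$. Combined with $M = W(X)$, this is exactly (3).

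The main obstacle is purely formal rather than conceptual: one must check that Proposition \ref{4.5} can be legitimately invoked with $M$ as the ambient universe. This is unproblematic, because the statement and proof of Proposition \ref{4.5} are carried out in $\ZF + \SVC$ and therefore relativize to any transitive model of $\ZF$ in which $\SVC$ holds; here that model is $M$, which we have already shown satisfies $\SVC$. After that, the proof is a clean concatenation of Lemma \ref{4.3} (transitivity of the symmetric-extension relation) with Lemma \ref{5.8} (under $\AC$, a symmetric ground satisfying $\AC$ is a ground).
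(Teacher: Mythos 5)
Your proof is correct and follows essentially the same route as the paper: (3)$\Rightarrow$(2) via Theorem \ref{1.2}, (2)$\Rightarrow$(1) trivially, and (1)$\Rightarrow$(3) by transferring $\SVC$ to $M$ with Proposition \ref{4.7}, applying Proposition \ref{4.5} inside $M$ to get $W$ and $X$ with $M=W(X)$, then using Lemma \ref{4.3} and Lemma \ref{5.8} to see $W$ is a $\ZFC$-ground of $V$. Your explicit remark that Proposition \ref{4.5} relativizes to $M$ is a point the paper leaves implicit, but otherwise the arguments coincide.
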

\begin{proof}
(3) $\Rightarrow$ (2). If $M=W(X)$ for some ground $W$ of $V$,
then $M$ is a  ground by Theorem \ref{1.2}.

(2) $\Rightarrow$ (1) is trivial.

(1) $\Rightarrow$ (3). Suppose $M$ is a symmetric ground of $V$.
Since $V$ satisfies $\SVC$ trivially,
$\SVC$ holds in $M$ as well by Proposition \ref{4.7}.
Then $M$ has a symmetric ground $W$ satisfying $\AC$ and 
a set $X$ with $M=W(X)$ by Proposition \ref{4.5}.
By Lemma \ref{4.3}, $W$ is a symmetric ground of $V$,
and in fact it is a ground by Lemma \ref{5.8}.
\end{proof}

We also note the following, which contrasts with Lemma \ref{5.3}.
\begin{cor}
Let $M$ be a symmetric ground of $V$.
Then for every model $N$ of $\ZF+\SVC$ with $M \subseteq N \subseteq V$,
$N$ is a symmetric ground of $V$, and $N$ is a symmetric extension of $M$.
\end{cor}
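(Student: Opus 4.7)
The plan is to invoke Lemma~\ref{5.3}: since $M$ is a symmetric ground of $V$, that lemma equates ``$N$ is a symmetric ground of $V$'', ``$N = M(X)$ for some set $X$'', and ``$N$ is a symmetric extension of $M$''. Thus it suffices to establish any one of these, and I aim for the third.

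I would first fix a common generic extension $V[G] = M[H^*]$ of $V$ and $M$, which exists since $V$ is a symmetric extension of $M$. This gives the chain $M \subseteq N \subseteq V \subseteq M[H^*]$ with $M$ a ground of $M[H^*]$. Using $\SVC$ in $N$, I pick a poset $\bbR \in N$ forcing $\AC$ over $N$, and let $K$ be $(V[G], \bbR)$-generic; then $K$ is also $(N, \bbR)$-generic, $N[K] \models \AC$, and $V[G][K]$ is a generic extension of $M$, say $V[G][K] = M[H^{**}]$ for some $(M, \bbQ^{**})$-generic $H^{**}$.

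The key claim is that $N[K] = M(Z)$ for some set $Z \in N[K]$. Given this, Theorem~\ref{1.2} applied to $M \subseteq N[K] \subseteq M[H^{**}]$ yields that $M[H^{**}]$ is a generic extension of $N[K]$, so by Proposition~\ref{3.4}(2) the model $N[K]$ is a symmetric extension of $M$. Since $N[K]$ is also a generic (hence symmetric) extension of $N$ via $\bbR$, applying Lemma~\ref{4.3} to the chain $M \subseteq N \subseteq N[K]$ gives that $N$ is a symmetric extension of $M$, which is the sought conclusion.

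The hard part is establishing $N[K] = M(Z)$: this is an intermediate-model-theorem-style assertion in the setting where the ground $M$ is a mere $\ZF$-model while the intermediate model $N[K]$ satisfies $\AC$. My plan for this step is to use $\AC$ in $N[K]$ to choose, for each $a \in N[K]$, a canonical $\bbQ^{**}$-name $\dot a \in M$ with $\dot a^{H^{**}} = a$, and then to assemble the resulting Boolean-algebraic information (namely the complete sub-algebra of $\mathrm{ro}(\bbQ^{**})^M$ generated by the Boolean values of the chosen names, together with the restriction of $H^{**}$ to it) into a single set $Z \in N[K]$, after which $M(Z) = N[K]$ should follow by a direct computation in the style of the classical intermediate-model theorem, adapted to the $\ZF$-ground setting.
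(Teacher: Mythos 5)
Your overall architecture is fine up to one point: granting your key claim that $N[K]=M(Z)$ for some set $Z\in N[K]$, the chain Theorem~\ref{1.2} $\to$ Lemma~\ref{3.2} $\to$ Lemma~\ref{4.3} $\to$ Lemma~\ref{5.3} does deliver the corollary. But the key claim is exactly where all the difficulty of the statement lives, and your plan for it does not work as described. What you are proposing to prove is a choiceless intermediate-model theorem: that an intermediate model of $\ZFC$ between a mere $\ZF$-ground $M$ and its generic extension $M[H^{**}]$ is generated over $M$ by a single set. The classical argument you invoke uses $\AC$ \emph{in the ground model} $M$ in an essential way: the complete subalgebra $\bbB_0$ of $\mathrm{ro}(\bbQ^{**})^M$ must be a complete subalgebra \emph{in the sense of $M$} (and an element of $M$) for the quotient-forcing computation to go through, whereas your generating family of Boolean values is only available in $N[K]$ (or $M[H^{**}]$), not in $M$; and the passage from ``each set of ordinals of $N[K]$ has its own subalgebra'' to ``one set $Z$ captures all of $N[K]$'' is precisely the step that breaks without choice in $M$. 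The paper's Note~\ref{2.7-} (the Bristol model) is a warning that intermediate models over choiceless grounds behave badly, and note also that $\AC$ in $N[K]$ alone cannot be the whole story, since in your argument the hypothesis $N\models\SVC$ is used only to manufacture $K$. In the specific situation at hand the claim $N[K]=M(Z)$ is in fact true, but only because the corollary itself is true ($N=M(X)$ by Lemma~\ref{5.3} once the corollary is known), so deriving it this way is circular unless you supply a genuinely new proof.

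For contrast, the paper routes around this obstruction rather than through it: it applies Proposition~\ref{4.5} \emph{inside $N$} (this is where $\SVC$ in $N$ does real work, and its proof rests on the deep downward-directedness theorem for $\ZFC$-grounds, Theorem~\ref{DDG}) to get a $\ZFC$ symmetric ground $W$ of $N$ with $N=W(X)$; combined with $V=M(Y)$ this gives $V=W(\{X,Y\})$, hence $\SVC$ in $V$, and after forcing $\AC$ to reach $V[G]$ one finds a $\ZFC$ ground $U$ of $V[G]$ inside $M$, so that the only intermediate-model argument ever used is the classical one between the $\ZFC$ models $U\subseteq W\subseteq V[G]$. If you want to salvage your write-up, replace the ``key claim'' step by an application of Proposition~\ref{4.5} in $N$ (or in $N[K]$) and work over the resulting $\ZFC$ model $W$ instead of over $M$.
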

\begin{proof}
By Proposition \ref{4.5}, 
$N$ has a symmetric ground $W$ satisfying $\AC$ and $N$ is of the form $W(X)$.
Since $M$ is a symmetric ground of $V$,
there is a set $Y$ with $V=M(Y)$.
Then clearly $V=W(\{X,Y\})$, and $\SVC$ holds in $V$. Force with $\col(\mathrm{trcl}(\{X,Y\}))$ over $V$.
This produces a generic extension $V[G]$ of $V$ 
such that $V[G]$ which satisfies $\AC$.
Moreover $\SVC$ holds in $M$ as well by Proposition \ref{4.7}.
Thus there is a symmetric ground $U$ of $M$
such that $U$ satisfies $\AC$ by Proposition \ref{4.5} again.
Then $U$ is a symmetric ground of $V[G]$, hence is a ground of $V[G]$ by Lemma \ref{5.8}.
Now $U \subseteq W \subseteq V[G]$ and $V[G]$ is a generic extension of $U$,
so $W$ is a ground of $V[G]$.
Therefore $N=W(X)$ is a symmetric ground of $V[G]$, and 
so $N$ is a symmetric ground of $V$.
In addition $N$ is a symmetric extension of $M$ by Lemma \ref{5.3}.
\end{proof}

%

\section{The downward directedness of symmetric grounds}
In $\ZFC$,
the collection of $\ZFC$-grounds  is downward directed (Theorem \ref{DDG}).
Unlike in the $\ZFC$-context, under $\ZF$+$\SVC$, 
it is possible that $V$ has two grounds which have no common ground
(see \cite{U2}).
On the other hand, we prove that under $\SVC$,  
all symmetric grounds are downward directed.

\begin{prop}\label{6.1}
Suppose $\SVC$ holds (hence all symmetric grounds are uniformly definable),
and let $\{\overline{W}_r \mid r \in V\}$ be the collection of all symmetric grounds.
Then for every set $X$,
there is a symmetric ground $W$ of $V$
such that $W \subseteq \bigcap_{r \in X} \overline{W}_r$,
and $W$ is a symmetric ground of each $\overline{W}_r$ for $r \in X$.
\end{prop}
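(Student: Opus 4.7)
The plan is to pass to a generic extension $V[G]$ satisfying $\AC$, reduce the problem to a family of $\ZFC$-grounds of $V[G]$, and then invoke the classical downward directed grounds theorem (Theorem \ref{DDG}) there.

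Since $\SVC$ holds, fix a $(V,\bbP)$-generic $G$ with $V[G]\models\ZFC$. For each $r\in X$, $\overline{W}_r$ is a symmetric ground of $V$, and $V[G]$ is a generic (hence symmetric) extension of $V$; by the transitivity of the symmetric extension relation (Lemma \ref{4.3}), each $\overline{W}_r$ is a symmetric ground of $V[G]$. By Proposition \ref{4.7}, $\SVC$ holds in each $\overline{W}_r$, so Proposition \ref{4.5} applied inside $\overline{W}_r$ produces a symmetric ground $U_r\subseteq\overline{W}_r$ of $\overline{W}_r$ satisfying $\AC$. Another application of Lemma \ref{4.3} upgrades $U_r$ to a symmetric ground of $V[G]$; since $V[G]\models\AC$, Lemma \ref{5.8} then gives that $U_r$ is in fact a $\ZFC$-ground of $V[G]$.

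Working inside $V[G]\models\ZFC$, the uniform definability of $\ZFC$-grounds lets us pick, for each $r\in X$, a parameter $s_r\in V[G]$ such that $U_r$ is the $\ZFC$-ground of $V[G]$ defined from $s_r$; using $\AC$ and replacement in $V[G]$ we assemble these into a set $S=\{s_r\mid r\in X\}$. Applying Theorem \ref{DDG} to $S$ inside $V[G]$ yields a $\ZFC$-ground $W$ of $V[G]$ with $W\subseteq\bigcap_{r\in X}U_r$, and therefore $W\subseteq\bigcap_{r\in X}\overline{W}_r\subseteq V\subseteq V[G]$.

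Finally, $V[G]$ is a generic (hence symmetric) extension of $W$ and is simultaneously a symmetric extension of $V$ and of each $\overline{W}_r$; two further applications of Lemma \ref{4.3} give that $V$ is a symmetric extension of $W$ and that each $\overline{W}_r$ is a symmetric extension of $W$, so $W$ is a symmetric ground of $V$ and of each $\overline{W}_r$, as required. The main technical point I expect to need care over is the coherent parametrization of the family $\{U_r:r\in X\}$ as a set $S$ inside $V[G]$; this is precisely what the detour through the $\AC$-extension affords, since replacement together with $\AC$ in $V[G]$ turns what would otherwise be a class-sized choice into a legitimate set-sized one suitable for Theorem \ref{DDG}.
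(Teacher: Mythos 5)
Your proposal is correct and follows essentially the same route as the paper: use Propositions \ref{4.7} and \ref{4.5} to replace each $\overline{W}_r$ by a symmetric ground satisfying $\AC$, pass to a generic extension $V[G]\models\AC$ where these become $\ZFC$-grounds (Lemma \ref{5.8}), apply Theorem \ref{DDG} there, and pull the resulting $W$ back to a common symmetric ground via Lemma \ref{4.3}. The only cosmetic difference is that the paper assembles the parameter set $Y$ in $V$ using the uniform definability of symmetric grounds, whereas you assemble $S$ in $V[G]$ using $\AC$; both are legitimate.
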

\begin{proof}
By Corollary \ref{4.7}, 
$\SVC$ holds in $\overline{W}_r$ for every $r \in X$.
By Proposition \ref{4.5},
each $\overline{W}_r$ has a symmetric ground satisfying $\AC$.
Every symmetric ground of $\overline{W}_r$ is a symmetric ground of $V$ as well
by Lemma \ref{4.3}.
Hence we can find a set $Y$ such that 
for each $s \in Y$, $\overline{W}_s$ satisfies $\AC$,
and for every $r \in X$ there is $s \in Y$
with $\overline{W}_s \subseteq \overline{W}_r$.

Take a generic extension $V[G]$ of $V$ such that $\AC$ holds in $V[G]$.
By Lemma \ref{5.8}, for each $s \in Y$, $\overline{W}_s$ is a ground of $V[G]$.
By Theorem \ref{DDG},
there is a ground $W$ of $V[G]$ 
such that $W$ satisfies $\AC$ and $W \subseteq \overline{W}_{s}$ for every $s \in Y$.
Now, for each $s \in Y$,
we have $W \subseteq \overline{W}_s \subseteq V \subseteq V[G]$.
Since $V[G]$ is a generic extension of $V$ and $W$,
we know that $W$ is a symmetric ground of $V$.
By the choice of $Y$, we have that $W \subseteq \bigcap_{r \in X} \overline{W}_r$.
In addition, by Lemma \ref{4.3},
$W$ is a symmetric ground of $\overline{W}_r$ for every $r \in X$.
\end{proof}

\section{Mantles}
If all grounds are uniformly definable as in Theorem \ref{unif. def.  grounds},
then we can define the intersection of all grounds.
We say that the intersection of all grounds is the \emph{$\ZF$-mantle}, or simply the \emph{mantle}, 
and the intersection of all $\ZFC$-grounds is the \emph{$\ZFC$-mantle}.
The mantle and $\ZFC$-mantle are  important objects in set-theoretic geology.

Suppose $\CLS$, and let $\varphi$ be the formula 
which defines all grounds uniformly as in Theorem \ref{unif. def.  grounds}.
As mentioned in Note \ref{notedayo}, in all generic extensions of $V$, 
$\varphi$ also defines all its grounds by the same way.
Then we can define the \emph{generic mantle},
which is the intersection of all grounds of all generic extensions.

\begin{define}[\cite{FHR}, \cite{U2}]
Suppose $\CLS$.
\begin{enumerate}
\item The \emph{$\ZF$-mantle}, or simply the \emph{mantle}, $\bbM^{\ZF}$ is the intersection of all grounds,
and the \emph{$\ZFC$-mantle} $\bbM^{\ZFC}$ is the intersection of all $\ZFC$-grounds.
\item The \emph{$\ZF$-generic mantle}, or the \emph{generic mantle}, $g\bbM^{\ZF}$ is the intersection of all grounds of
all generic extensions,
and the \emph{$\ZFC$-generic mantle} $g\bbM^{\ZFC}$
is the intersection of all $\ZFC$-grounds of
all generic extensions.
\end{enumerate}
\end{define}
It is known that the ($\ZFC$-)mantle and the ($\ZFC$-)generic mantle are  parameter free definable 
transitive classes (under $\CLS$).
In $\ZFC$, it is known that the $\ZFC$-mantle is a model of $\ZFC$,
and it coincides with the $\ZFC$-generic mantle (\cite{FHR}, \cite{U1}).
In $\ZF(+\CLS)$, the generic mantle is a model of $\ZF$, and it is a forcing invariant model
(\cite{FHR}, \cite{U2}).
\begin{thm}[\cite{U2}]
If $\SVC$ holds,
then the generic mantle $g\bbM^{\ZF}$ is a model of $\ZFC$.
Moreover, $g\bbM^{\ZF}$ coincides with $g\bbM^{\ZFC}$.
\end{thm}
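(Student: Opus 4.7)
The plan is to use $\SVC$ to force $\AC$, apply the known $\ZFC$ theory of the mantle in the resulting universe, and transfer back via the forcing invariance of the $\ZF$-generic mantle. Concretely, using $\SVC$ fix a poset $\bbP$ forcing $\AC$ and a $(V, \bbP)$-generic $G$, so $V[G] \models \AC$. Since every generic extension of $V[G]$ is a generic extension of $V$, and any generic extension of $V$ can be further extended to a generic extension of $V[G]$ by composing forcings, the $\ZF$-grounds (respectively $\ZFC$-grounds) of all generic extensions of $V$ and of $V[G]$ generate the same intersection class; combined with the forcing invariance of the $\ZF$-generic mantle cited just before the theorem, this yields $g\bbM^{\ZF}(V) = g\bbM^{\ZF}(V[G])$ and $g\bbM^{\ZFC}(V) = g\bbM^{\ZFC}(V[G])$. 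So it suffices to establish both conclusions inside the $\ZFC$ universe $V[G]$.

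Work in $V[G]$. The containment $g\bbM^{\ZF} \subseteq g\bbM^{\ZFC}$ is automatic, since a smaller family of grounds (the $\ZFC$-grounds) intersects to a larger class. For the reverse I would show that every $\ZF$-ground $W$ of every generic extension $V[G][H]$ contains some $\ZFC$-ground of $V[G][H]$. Since $V[G][H] \models \AC$ it satisfies $\SVC$, and by absoluteness of $\SVC$ under the ground relation $\SVC$ holds in $W$ as well. Proposition \ref{4.5} applied inside $W$ then produces a symmetric ground $U$ of $W$ with $U \models \AC$. Because $W$ is itself a ground (hence a symmetric ground) of $V[G][H]$, Lemma \ref{4.3} promotes $U$ to a symmetric ground of $V[G][H]$, and since $V[G][H] \models \AC$, Lemma \ref{5.8} upgrades $U$ to an honest ground of $V[G][H]$. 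Thus $U$ is a $\ZFC$-ground of $V[G][H]$ lying inside $W$, giving $g\bbM^{\ZFC}(V[G]) \subseteq g\bbM^{\ZF}(V[G])$.

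Combining the two inclusions gives $g\bbM^{\ZF}(V[G]) = g\bbM^{\ZFC}(V[G])$; the classical $\ZFC$ theory cited immediately before the theorem then identifies this further with $\bbM^{\ZFC}(V[G])$ and certifies it as a model of $\ZFC$. Pulling back through the forcing-invariance identifications of the first paragraph gives both assertions for $V$. The main obstacle is the extraction of a $\ZFC$-ground inside an arbitrary $\ZF$-ground: one cannot directly choice-ify a given $\ZF$-ground, and the argument must detour through the choiceful symmetric-ground machinery of Proposition \ref{4.5} and the $\ZFC$-upgrade in Lemma \ref{5.8}, glued together by the transitivity of the symmetric-extension relation (Lemma \ref{4.3}) which is exactly what makes the resulting small model land as a genuine ground of the ambient $V[G][H]$.
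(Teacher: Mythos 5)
The paper does not actually prove this statement: it is quoted from \cite{U2} and used as a black box, so there is no in-paper proof to match yours against. Judged on its own, your argument is essentially correct, and it is a genuinely different (and self-contained, relative to this paper) route: instead of whatever direct $\SVC$ argument \cite{U2} gives, you derive the theorem from the symmetric-ground machinery of Sections 4--5 --- extracting a $\ZFC$-ground inside an arbitrary $\ZF$-ground of a $\ZFC$ generic extension via Proposition \ref{4.5} applied in $W$, transitivity (Lemma \ref{4.3}), and the upgrade Lemma \ref{5.8} --- which is exactly the technology the paper itself deploys later for Proposition \ref{6.1} and for the identification of the symmetric mantle with the generic mantle. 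There is no circularity, since none of those lemmas depend on the quoted theorem. Two small points to tighten. First, the assertion that ``any generic extension of $V$ can be further extended to a generic extension of $V[G]$ by composing forcings'' is not literally right: a generic extension $V[K]$ need not contain $G$, and an arbitrary further extension of $V[K]$ need not be a generic extension of $V[G]$; one needs mutual genericity or the Levy-collapse absorption of Lemma \ref{1.5+} --- or simply the forcing invariance of $g\bbM^{\ZF}$ that you also cite, which suffices. Second, you do not need full forcing invariance of $g\bbM^{\ZFC}$: the easy inclusion $g\bbM^{\ZFC}(V)\subseteq g\bbM^{\ZFC}(V[G])$ (the generic extensions of $V[G]$ form a subfamily of those of $V$) already closes the chain $g\bbM^{\ZF}(V)\subseteq g\bbM^{\ZFC}(V)\subseteq g\bbM^{\ZFC}(V[G])=g\bbM^{\ZF}(V[G])=g\bbM^{\ZF}(V)$. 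With those repairs the proof stands.
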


Let us consider the intersection of all symmetric grounds.
\begin{define}
Suppose $\CLS$.
The \emph{symmetric mantle} $s\bbM$ is the intersection of all symmetric grounds.
\end{define}
As the mantle and the generic mantle,
the symmetric mantle is a parameter-free first order definable transitive class
containing all ordinals.
By the definitions, we have $g \bbM^{\ZF} \subseteq s\bbM \subseteq \bbM^{\ZF}$.

%
%
%
%
%

\begin{prop}
If $\SVC$ holds,
then the symmetric mantle coincides with the generic mantle.
In particular the symmetric mantle is a model of $\ZFC$.
\end{prop}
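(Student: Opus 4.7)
The inclusion $g\bbM^{\ZF} \subseteq s\bbM$ is already observed just before the statement, so the plan is to prove the reverse inclusion $s\bbM \subseteq g\bbM^{\ZF}$. Under $\SVC$ the previously cited theorem yields both $g\bbM^{\ZF} = g\bbM^{\ZFC}$ and $g\bbM^{\ZF} \models \ZFC$, so it suffices to show that $s\bbM \subseteq W$ for every $\ZFC$-ground $W$ of every generic extension $V[G]$ of $V$; the ``In particular'' clause is then immediate.

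The key idea is to produce, for any such $W$, a symmetric ground $W^{*}$ of $V$ with $W^{*} \subseteq W$, from which $s\bbM \subseteq W^{*} \subseteq W$ follows by definition of $s\bbM$. To build $W^{*}$, I would first invoke Proposition \ref{4.5} (available under $\SVC$) to select a symmetric ground $V_{0}$ of $V$ with $V_{0} \models \AC$; by Definition \ref{3.5}, $V_{0}$ is a $\ZFC$-ground of some generic extension $V[G_{0}]$ of $V$, and since $V_{0} \models \AC$, the extension $V[G_{0}]$ itself satisfies $\AC$. I would then form a common generic extension $V[G][G_{0}]$ of $V[G]$ and $V[G_{0}]$ over $V$, by choosing $G_{0}$ mutually generic with $G$; this model again satisfies $\AC$, and by the transitivity of the ``is a ground of'' relation recalled at the start of Section~2, both $W$ and $V_{0}$ are $\ZFC$-grounds of the $\ZFC$ model $V[G][G_{0}]$. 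Now Theorem \ref{DDG} applies inside $V[G][G_{0}]$ and yields a $\ZFC$-ground $W^{*}$ of $V[G][G_{0}]$ with $W^{*} \subseteq W \cap V_{0} \subseteq V$. Because $V[G][G_{0}]$ is simultaneously a generic extension of $V$ via a poset in $V$ and a generic extension of $W^{*}$ via a poset in $W^{*}$, Definition \ref{3.5} directly certifies $W^{*}$ as a symmetric ground of $V$, finishing the argument.

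The step I expect to be most delicate is the passage from the arbitrary $\ZFC$-ground $W$ of $V[G]$ -- which need not be contained in $V$ at all -- to a $\ZFC$-ground of a $\ZFC$ super-universe in which it can be pushed down below something living inside $V$. The role of $\SVC$ is essential twice: once to produce the $\AC$-satisfying symmetric ground $V_{0}$ via Proposition \ref{4.5}, and once to guarantee that $V[G][G_{0}]$ is a $\ZFC$ model where the downward directedness theorem for $\ZFC$-grounds is available. Once this scaffolding is in place, the conversion of $W^{*}$ back into a symmetric ground of $V$ is merely unpacking Definition \ref{3.5}.
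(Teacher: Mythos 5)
Your proof is correct, but it takes a genuinely different route from the paper's. The paper's argument is three lines: given $x \notin g\bbM^{\ZF}$, pick a generic extension $V[G]$ and a ground $W$ of $V[G]$ with $x \notin W$; then $W$ and $V$ are both grounds, hence symmetric grounds, of $V[G]$, and Proposition \ref{6.1} applied inside $V[G]$ (where $\SVC$ still holds by absoluteness) yields a common symmetric ground $M$ of $W$ and $V$; since $M \subseteq W$ and $M$ is a symmetric ground of $V$, we get $x \notin s\bbM$. What you do instead is essentially inline the proof of Proposition \ref{6.1} for the one instance needed: you reduce to $\ZFC$-grounds via $g\bbM^{\ZF}=g\bbM^{\ZFC}$, manufacture an $\AC$-satisfying symmetric ground $V_0$ by Proposition \ref{4.5}, pass to a mutually generic extension $V[G][G_0]\models\AC$, and apply Theorem \ref{DDG} there. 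This buys self-containedness (you never need the downward-directedness statement packaged with the uniform definability of symmetric grounds) at the cost of redoing its mechanics. One small imprecision: your last step is not quite ``directly'' Definition \ref{3.5} --- knowing that the single model $V[G][G_0]$ happens to be simultaneously a generic extension of $V$ and of $W^{*}$ does not by itself give the \emph{forcing} statement the definition demands (one must first see that $W^{*}$ is a class of $V$ and then invoke the forcing theorem, or, as the paper does in the proof of Proposition \ref{4.5}, use Theorem \ref{1.2} to write $V=W^{*}(X)$ and conclude via Lemma \ref{5.3}). The paper allows itself the same gloss elsewhere, so this is not a gap, but the correct citations are Theorem \ref{1.2} and Lemma \ref{5.3} rather than the bare definition.
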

\begin{proof}
Take  a set $x$ and suppose $x \notin g\bbM^{\ZF}$.
Then there is a generic extension $V[G]$ of $V$ and a ground $W$ of $V[G]$ such that
$x \notin W$. Note that  $W$ need not  be a symmetric ground of $V$,
but $W$ and $V$ are symmetric grounds of $V[G]$.
By Proposition \ref{6.1}, there is $M$ which is a common symmetric ground of $W$ and $V$.
Then $x \notin M$, hence $x \notin s\bbM$.
\end{proof}
However, beside the generic mantle, it is not known if $s \bbM$ and $\bbM^{\ZF}$ are always models of $\ZF$.

\section{Questions}
To conclude this paper let us pose some questions.
\begin{question}
Without any assumption, are all symmetric grounds uniformly definable?
\end{question}
This question is almost equivalent to the uniform definability of all grounds in $\ZF$,
this is also asked in \cite{U2}.

\begin{question}
Is the symmetric mantle a model of $\ZF$?
\end{question}

\begin{question}
Does the symmetric mantle always coincide with the generic mantle?
\end{question}

As in the context of  $\ZFC$,
the downward directedness of symmetric grounds
yields a positive answer of this question.

\begin{question}
Without any assumption, are all symmetric grounds downward directed?
\end{question}

\begin{question}
Is it consistent that the symmetric mantle is strictly smaller than the mantle?
\end{question}

Under $V=L$, there is no proper symmetric ground.
However we do not know a choiceless model which has no proper symmetric ground.

\begin{question}
Is it consistent that $V$ does not satisfy $\AC$, and has no proper symmetric ground?
\end{question}
If there exists such a model, then $\SVC$ must fail in the model by Proposition \ref{4.5}.
A similar question is:
\begin{question}
Is it consistent that $V$ has a proper symmetric ground but has no
proper ground?
\end{question}
If such a model exists, then $\AC$ must fail in the model, and 
the symmetric mantle of the model is strictly smaller than
the mantle.

\section*{Acknowledgments}
The author would like to thank Asaf Karagila for many valuable comments.
The author also thank the referee for many useful comments and suggestions.
This research was supported by
JSPS KAKENHI Grant Nos. 18K03403 and 18K03404.
%
%
%
%
%
%
%
%
%
%
%
%
%
%

\end{document}